\newcommand{\Z}{\mathbf{Z}}
\newcommand{\Ps}{\mathbf{P}}
\newcommand{\C}{\mathbf{C}}
\newcommand{\Q}{\mathbf{Q}}
\newcommand{\cT}{\mathcal{T}}
\newtheorem{theorem}{Theorem}[section]
\newtheorem{lemma}[theorem]{Lemma}
\newtheorem{proposition}[theorem]{Proposition}
\theoremstyle{definition}
\newtheorem{definition}[theorem]{Definition}
\newtheorem{notation}[theorem]{Notation}
\newtheorem{assumption}[theorem]{Assumption}
\newtheorem{example}[theorem]{Example}
\theoremstyle{remark} 
\newtheorem{remark}[theorem]{Remark}
\DeclareMathOperator{\coker}{coker}
\DeclareMathOperator{\prim}{prim}
\DeclareMathOperator{\sing}{sing}
\DeclareMathOperator{\Proj}{Proj}
\DeclareMathOperator{\Spec}{Spec}
\DeclareMathOperator{\sat}{sat}
\DeclareMathOperator{\Gr}{Gr}
\DeclareMathOperator{\ima}{im}
\begin{document}

\title[Alexander polynomials and deformations]{Deformations of hypersurfaces with non-constant Alexander polynomial}
\author{Remke Kloosterman}

\email{klooster@math.unipd.it}
\address{Universit\`a degli Studi di Padova,
Dipartimento di Matematica ``Tullio Levi-Civita",
Via Trieste 63,
35121 Padova, Italy}
\thanks{The author would like to thank Orsola Tommasi for comments on a previous version of this paper. The author would like to thank the referees for various suggestions to improve the exposition.}

\begin{abstract} 
Let $X \subset \Ps^n$ be an irreducible hypersurface of degree $d\geq 3$ with only isolated semi-weighted homogeneous singularities,  such that $\exp(\frac{2\pi i}{k})$ is a zero of its Alexander polynomial. Then we show that the equianalytic deformation space of $X$ is not $T$-smooth  except for a finite list of triples $(n,d,k)$.

This result captures the very classical examples  by B. Segre of families  of degree $6m$ plane curves with $6m^2$, $7m^2$, $8m^2$ and $9m^2$ cusps, where $m\geq 3$.

Moreover, we argue that many of the hypersurfaces with non-trivial Alexander polynomial are limits of constructions of hypersurfaces with not $T$-smooth deformation spaces. In many instances this description can be used to find  candidates for  Alexander-equivalent Zariski pairs.
\end{abstract}

\maketitle

\section{Introduction}\label{secIntro}
Let $X\subset \Ps^n$ be a hypersurface with isolated singularities and let $\Delta_X\in \Z[t]$ be its Alexander polynomial (cf. Defintion~\ref{defAlexPol}).

The first example in  the literature of a hypersurface with non-constant Alexander polynomial is a plane sextic with 6 cusps on a conic, due to Zariski \cite{ZarSyz}. One easily checks  that there exist  sextic curves with six cusps such that these six cusps are not on a conic. Such a sextic has a constant Alexander polynomial. Hence we obtain a Zariski pair, a pair of singular hypersurfaces $X_1,X_2\subset \Ps^n$  with the same combinatorial data, but such that there is no homeomorphism between the pairs $(\Ps^n,X_1)$ and $(\Ps^n,X_2)$.  One easily checks that in this case both curves have a $T$-smooth equianalytic deformation space, see \cite[Section VIII.5]{ZarAlgSur}.

B. Segre  noted that one can easily generalise this example to higher degree and considered two families of degree $6m$ curves with $6m^2$ cusps. For the first construction, pick two sufficiently general homogenous polynomials  $f,g\in \C[x_0,x_1,x_2]$ of degree $2m$ and $3m$ respectively. Then the curve of degree $6m$ given by
\[ f^{3}+g^{2}=0\]
has $6m^2$ cusps. Within the space of degree $6m$ curves the expected codimension of the locus of curves with $6m^2$ cusps is $12m^2$,  however, one easily shows that this family has codimension $12m^2-\frac{1}{2}(m-1)(m-2)$. I.e., for $m>2$ this family has a larger equisingular deformation space than expected.  Similar examples are due to Segre for degree $6m$ curves with either $7m^2$, $8m^2$ ot $9m^2$ cusps. (For more on this see \cite[Section VIII.5]{ZarAlgSur}.) These examples  have non-constant Alexander polynomial and their equisingular deformation spaces  have larger dimension than expected. We will come back to these examples in Example~\ref{exaConstr}.

Take now a sextic curve $C$ with six cusps not on a conic and pull this curve back under  a general self-map of $\Ps^2$ of degree $m$.
Then the pullback of $C$ has $6m^2$ cusps. One easily checks that  the saturation of the Jacobian ideal $J^{\sat}$ is $6m$-regular, and therefore the equianalytic deformation space is $T$-smooth. For ordinary cusps the equianalytic and equisingular deformation space coincide, hence also the latter space is $T$-smooth.
Hence for every $m>2$ we have two ways to show that the space parametrizing  curves of degree $6m$ with $6m^2$ cusps is reducible, i.e., we can detect this both by the Alexander polynomial and by the dimension of the equisingular deformation space.

There have been more ``recent" attempts to explain this excess dimension  of the deformation space (e.g., \cite{Tan}). We will give an explanation different from those we  were able to locate in the literature. 
Our main result states that Segre's construction is part of a rather frequently occurring phenomenon:
\begin{theorem}
Let $(n,d,k)$ be integers such that $d\geq 3,n\geq 2$ and $k\geq 1$. 
Suppose $X\subset \Ps^n$ is an irreducible hypersurface of degree $d$ with isolated semi-weighted homogeneous singularities (cf. Definition~\ref{defSing}) such that $\exp(2\pi i/k)$ is a zero of its Alexander polynomial of $X$.

Moreover, assume that we are \emph{not} in one of the following cases
\begin{enumerate}
\item $n=2$, $d\in \{6,12\}$ and $k=6$.
\item $n\in \{3,4,6\}$, $d=3$ and $k=1$; 
\item $n\in \{3,4,5\}$, $d=3$ and $k=3$;
\item $n=3$, $d\in \{4,6\}$ and $k=2$;
\item $n=3$, $d=k=4$;
\item $n=4$, $d=4$ and $k=1$;
\end{enumerate}
Then the equianalytic deformation space of $X$ is not $T$-smooth.
\end{theorem}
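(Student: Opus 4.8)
The plan is to translate both the hypothesis and the conclusion into statements about the cohomology of twisted ideal sheaves supported on $\sing X$, and then to bridge them using the self-dual structure of the graded Jacobian module.

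\textbf{Reformulating $T$-smoothness.} Write $X=V(f)$ with $f\in S:=\C[x_0,\dots,x_n]$ of degree $d$, let $J_f=(\partial_0 f,\dots,\partial_n f)$ be the Jacobian ideal and $N(f):=H^0_{\mathfrak m}(S/J_f)$ the finite-dimensional local cohomology module. Because the singularities are isolated and semi-weighted homogeneous, after an analytic change of coordinates the local Tjurina ideal agrees with the local Jacobian ideal, so the equianalytic deformation scheme is cut out by $J_f$; by the standard infinitesimal description of equianalytic deformations one then has
\[
X \text{ is } T\text{-smooth}\iff N(f)_d=0\iff H^1\bigl(\Ps^n,\mathcal I_{\Sigma}(d)\bigr)=0,
\]
where $\Sigma=\Proj(S/J_f^{\sat})$ is the Jacobian subscheme supported on $\sing X$. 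So I must prove $N(f)_d\neq 0$ for every triple outside the exceptional list.

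\textbf{Encoding the Alexander root.} By the Hodge-theoretic computation of the Alexander polynomial of a hypersurface with isolated singularities --- via the mixed Hodge structure on the Milnor fibre and the ideals of quasi-adjunction (Libgober, Loeser--Vaqui\'e, Dimca) --- the hypothesis that $\exp(2\pi i/k)$ is a zero of $\Delta_X$ is equivalent to the non-vanishing of $H^1(\Ps^n,\mathcal I_W(a))$ for an explicit twist $a=a(n,d,k)$ that is linear in $d$ and $n$ (essentially $a=d-(n+1)-d/k$, corrected by the jumping-number data, and in particular $a<d$), where $W$ is the zero-dimensional subscheme of $\sing X$ defined by the ideals of quasi-adjunction of level $1-\tfrac1k$. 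For semi-weighted homogeneous singularities these local ideals are read off from the weights, and since the multiplier ideal of $(1-\tfrac1k)X$ contains the local Jacobian ideal one gets $W\subseteq\Sigma$, hence $\mathcal I_{\Sigma}\subseteq\mathcal I_W$. Translating this non-vanishing through the fixed degree shift relating quasi-adjunction superabundance to graded pieces of the Jacobian module yields a single nonzero component $N(f)_{a'}\neq 0$ with $a'=a'(n,d,k)<d$ explicit. (For $k=1$ the relevant part of $\Delta_X$ comes from the monodromy-invariant cohomology instead, and one argues with the primitive middle cohomology of $X$ in place of $N(f)$; the bookkeeping is parallel.)

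\textbf{Propagating to degree $d$.} The module $N(f)$ is self-dual (Sernesi, van Straten): there is a perfect pairing $N(f)_j\times N(f)_{\sigma-j}\to\C$ for a socle degree $\sigma=\sigma(n,d)$ linear in $d$, so $\dim N(f)_d=\dim N(f)_{\sigma-d}$. Combined with the fact that $N(f)$ has no internal gaps across its range of nonzero degrees --- equivalently, that multiplication by a general linear form is injective on $N(f)_j$ for $j<\sigma/2$, a weak-Lefschetz-type property for Jacobian modules --- the non-vanishing $N(f)_{a'}\neq 0$ forces $N(f)_j\neq 0$ for all $j\in[a',\sigma-a']$, hence $N(f)_d\neq 0$ as soon as the explicit linear inequality $d+a'\le\sigma$ holds (the lower bound $a'<d$ being automatic). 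A finite computation --- tabulating the triples $(n,d,k)$ (necessarily with $k\mid d$, since the zeros of $\Delta_X$ are $d$-th roots of unity) for which this inequality fails, together with the low-degree cases where the quasi-adjunction scheme $W$ is forced to be empty or the relevant cohomological degrees leave the admissible range --- produces exactly the six families in the statement. In particular Segre's degree-$6m$ plane curves with $6m^2,7m^2,8m^2,9m^2$ cusps fall under $n=2$, $d=6m$ with $m\ge 3$ (for the appropriate small $k$), which lies outside the exceptional list, recovering the classical examples.

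\textbf{Main obstacle.} The crux is the propagation step, and within it the no-gap/injectivity statement for $N(f)$: self-duality alone relates degree $d$ only to degree $\sigma-d$ and does not by itself prevent $N(f)$ from vanishing at the intermediate degree $d$ while being nonzero at $a'$ and $\sigma-a'$. Establishing a sufficiently strong partial weak Lefschetz property for the Jacobian module of a hypersurface with semi-weighted homogeneous singularities, and then verifying that the resulting numerical failure set is \emph{exactly} the six families and not larger --- which also requires ruling out the sporadic $T$-smooth examples with non-trivial Alexander polynomial in small degree --- is where the real work lies.
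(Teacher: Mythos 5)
There is a genuine gap, and it is the one you flag yourself: the propagation step. Your Hodge-theoretic input lands in a degree $a'<d$, and you then try to climb up to degree $d$ using self-duality of $N(f)=J^{\sat}/J$ together with a no-gap/weak-Lefschetz property for this module; that property is nowhere established (and is not available off the shelf in this generality, certainly not for $n\geq 3$), so the chain breaks exactly where the conclusion is supposed to appear. The paper is engineered so that no such statement is needed, in two ways. First, it works not with the module $J^{\sat}/J$ but with the \emph{defect} of the saturated ideal, i.e. the cokernel of $R_j\to\oplus_{p\in S}\cT_p$; for the ideal of a zero-dimensional scheme this defect is non-increasing in $j$ by the trivial multiplication-by-$x_0$ argument, so non-vanishing in a high degree propagates \emph{down} to degree $d$ for free. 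Second, the hypothesis is converted into defect in a \emph{high} degree by the symmetry of the spectrum, $n_\alpha=n_{n-\alpha}$ (Hodge symmetry on the pure structures $H^n(X)$ and $H^{n+1}(\tilde X)$, plus rationality of $\Delta_X$), which produces a nonzero $n_\alpha$ with $\alpha\geq\alpha(n,d,k)\geq n/2$ and hence defect in degree $\alpha d-n-1\geq d$. Your twist $a\approx d(1-\tfrac1k)-(n+1)$ sits on the wrong side of $d$: since the defect is non-increasing, defect below degree $d$ says nothing about degree $d$, which is precisely why you are forced into the unproven Lefschetz-type statement. In addition, your opening equivalence ``$T$-smooth $\iff N(f)_d=0$'' conflates $(J^{\sat}/J)_d$ with $H^1(\Ps^n,\mathcal I_\Sigma(d))$; the Greuel--Lossen--Shustin criterion is the latter (defect in degree $d$), and these graded objects are different (they are related only through a duality with a degree shift), so even the bridge between your quasi-adjunction non-vanishing and the quantity that controls $T$-smoothness is not correctly in place. (Also, the quasi-adjunction description gives non-vanishing of a \emph{sum} of superabundances over several twists, not of one explicit twist, so some bookkeeping is missing there as well.)

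A second, independent problem is the exceptional list. Even granting perfect propagation, your ``finite computation'' would not return exactly the six stated families: for $n=2$ the relevant numerical inequality also fails for prime-power $k$ in small degree (e.g. $(d,k)=(4,2),(3,3),(6,3),(4,4),(5,5),\dots$). The paper eliminates these cases by Zariski's theorem that for an irreducible plane curve $\Delta_X(\zeta_{p^r})\neq 0$ for every prime power $p^r$, so only non-prime-power $k$ (hence $k\geq 6$) can occur when $n=2$, leaving only $(d,k)\in\{(6,6),(12,6)\}$ as exceptions. You allude to ``ruling out sporadic examples'' but provide no mechanism; without this arithmetic input the theorem with the stated list cannot be reached.
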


If $d=1$ then $X$ is a hyperplane and therefore $X$ is smooth. If $d=2$ and $X$ has isolated singularities then $X$ is a quadric of rank one less than the maximal rank. In this case the deformation theory is very simple. For this reason we considered only the case $d\geq 3$.

Unfortunately, our methods only show that the tangent space has dimension larger than expected and our method can only be applied to the equianalytic deformation space. 
In particular, this result  does not show that the equisingular deformation space has dimension larger than expected, but it is strong evidence of it. More precisely,  let $X$ be a hypersurface with nonconstant  Alexander polynomial and let $X'$ be an equisingular deformation of $X$.  Then the Alexander polynomials of $X$ and $X'$ are the same. Hence the equianalytic deformation space of every equisingular deformation of $X$ is nowhere $T$-smooth and therefore each of these spaces is non-reduced or has dimension larger than expected. In case there exists a further hypersurface $X''$ with the same combinatorial data as $X$, but with a $T$-smooth equianalytic deformation space then the space of hypersurfaces with this  combinatorial data has at least two irreducible components.

In the final section we will provide constructions of hypersurfaces with deformation spaces with dimension larger than expected. These constructions depend on choices of several parameters, each of which are integers, subject to several inequalities. For most choices of parameters the resulting hypersurface has constant Alexander polynomial, but for a few choices of these parameters the corresponding hypersurface has nonconstant Alexander polynomial. Moreover, in the latter case at least one of the before-mentioned inequalities turns out to be an equality, i.e., the examples with nonconstant Alexander polynomials can be considered to be  boundary cases or limit cases.
 This strongly suggests that the Alexander polynomial is not an optimal invariant to detect examples of reducible spaces parametrizing  singular hypersurfaces with fixed combinatorial data. The dimension of the equisingular  deformation space seems a better invariant.  On the other hand, certain geometric phenomena (e.g.,  quasi-torus structures, the relation with Mordell-Weil ranks of isotrivial fibrations) can only occur for hypersurfaces with non-constant Alexander polynomials, see \cite{CogLib, TorDec}.

The assumption that all singularities are semi-weighted homogeneous is needed in our proof, as we heavily use the fact that for each singular point the pole-order filtration and the Hodge filtration on the cohomology of the Milnor fiber coincide. If one could control the difference between these two filtrations for other types of singularities then one might be able to extend our approach to larger classes of singularities.

The proof of the main result consists of two parts. In the first part we reconsider Dimca's approach \cite{DimBet,Dim} to calculate the Alexander polynomial of a hypersurface with isolated semi-weighted homogenous singularities. This is done in Section~\ref{SecCalCoh}.
The upshot of this method is that if $\exp(2\pi i/k)$ is a zero of its Alexander polynomial and $J$ is the Jacobian ideal of $X$, then its saturation $J^{\sat} $ has defect in every degree $\leq \alpha(n,d,k) d-n-1$. The rational number $\alpha(n,d,k)$ will be introduced in Section~\ref{SecComb}, but for the rest of this Introduction it suffices to know that $\alpha(n,d,k)$ lies in the interval $[\frac{n}{2},\frac{n+1}{2}]$.

In Section~\ref{SecComb} we determine all values $(n,d,k)$ for which  $\alpha(n,d,k) d-n+1<d$. Except for the case $n=2,k=1$ (reducible plane curves) and $d=2$ (quadric cones) there are only finitely many triples $(n,d,k)$ for which this inequality holds. This part is a purely combinatorial exercise.
It then easily follows that the tangent space of the equisingular deformation space is larger than expected, except for these exceptional values of $(n,k,d)$ .

In Section~\ref{SecExa} we discuss some examples and explain some of the remarks made above in more detail.

\section{Calculation of $H^n(X)$}\label{SecCalCoh}
In this section we discuss Dimca's method to calculate the mixed Hodge structure  on the cohomology of hypersurfaces with isolated semi-weighted homogeneous singularities, see \cite{DimBet} and \cite[Section 6.3]{Dim}. At certain instances we differ slightly from Dimca's approach and for that reason we recall large part of the construction. 

\begin{notation}
Let $n\geq 2$ and let $R=\C[x_0,\dots,x_n]$ be the polynomial ring in $n+1$ variables, with its natural grading. Let $d\geq 1$ be an  integer.  For $f\in R_d$ let $X=V(f)\subset \Ps^n$ be the associated hypersurface.
Let $U=\Ps^n\setminus X$ and let $X^*=X\setminus S$, where $S=X_{\sing}$. 
\end{notation}

\begin{assumption}
For the rest of this section we assume that $f$ is chosen such that $X$ has isolated singularities, i.e., that $S$ is finite.
\end{assumption}

\begin{remark}
Using the Lefschetz hyperplane theorem and a result by Kato-Matsu\-moto \cite[Theorem 5.2.6 and 5.2.11]{Dim} we can determine $H^j(X,\C)$ for all $j\neq n-1,n$. In the rest of this section our focus will be on the case $j=n$, since this group is used to determine the Alexander polynomial of $X$, as we will see in the next section. More precisely, we  aim to give  an upper bound for the dimension of the graded pieces $\Gr_F^s(H^n(X))$ of the Hodge filtration $F$.

If $n=2$ then $H^2(X)$ is of pure type $(1,1)$ and the dimension equals the number of irreducible components of $X$. Hence we may assume for the moment that $n\geq 3$.
\end{remark}

\begin{notation}
Denote with $H^j(X)_{\prim}$ and $H^j(X^*)_{\prim}$ the primitive cohomology as defined in \cite[Section 2]{DimBet}. 
\end{notation}
 By construction, the groups 
  $H^j(X)_{\prim}$ and $H^j(X^*)_{\prim}$ are sub-Hodge structures of $H^j(X)$ and $H^j(X^*)$ respectively. The following result can be found at \cite[page 291]{DimBet}:
  
\begin{proposition}\label{prpPrim} Let $\iota :X^*\to X$ be the inclusion map. Then for all $j\neq 2n-2$ we have that the kernel (respectively the cokernel) of
$\iota^* : H^j(X)\to H^j(X^*)$
equals the kernel (respectively the cokernel) of
$ \iota^*: H^j(X)_{\prim} \to H^j(X^*)_{\prim}.$

Moreover, $H^n(X^*)_{\prim}=0$. 
\end{proposition}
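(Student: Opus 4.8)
The plan is to reduce everything to a Mayer--Vietoris / excision comparison between the cohomology of $X$ and $X^*$ via a tubular neighborhood of the singular set $S$, using that $S$ is finite and that $X$ has isolated singularities. First I would recall that $X^* = X \setminus S$ is a smooth quasi-projective variety of dimension $n-1$, so $H^j(X^*) = 0$ for $j > 2n-2$ and, by Artin vanishing for affine-like strata, one controls the top piece; the key structural input is that near each point $p \in S$ the pair $(X, p)$ looks like a cone on the link, so the local cohomology $H^j_S(X)$ is concentrated in a controlled range of degrees. Concretely, for a small ball $B_p$ around $p$ in $X$, $H^j(B_p, B_p \setminus \{p\}) \cong \widetilde{H}^{j-1}(L_p)$ where $L_p$ is the link, which vanishes outside $j = 0$ and a band around $j = n-1, n$ (this is where the isolated-singularity hypothesis does the work). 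The long exact sequence of the pair $(X, X^*)$ then reads
\[
\cdots \to H^j_S(X) \to H^j(X) \xrightarrow{\iota^*} H^j(X^*) \to H^{j+1}_S(X) \to \cdots,
\]
so $\ker(\iota^*)$ and $\coker(\iota^*)$ on $H^j$ are subquotients of $H^j_S(X)$ and $H^{j+1}_S(X)$, which are supported only in the relevant degrees; in particular the restriction map is an isomorphism except when $j$ or $j+1$ falls in that band, and the only genuinely delicate index is $j = 2n-2$, which is excluded from the statement.

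Next I would bring in the primitive decomposition. By definition (following \cite[Section 2]{DimBet}), $H^j(X) = H^j(X)_{\prim} \oplus (\text{image of } H^j(\Ps^n))$ and similarly for $X^*$, where the non-primitive part is generated by powers of the hyperplane class pulled back from $\Ps^n$. The inclusion $\iota: X^* \hookrightarrow X$ is compatible with the maps from $\Ps^n$, so $\iota^*$ respects this direct-sum decomposition and is simply the identity on the hyperplane-class summands (these classes restrict to nonzero classes as long as we are below the middle dimension, which for the non-primitive part of $H^j(X^*)$ with $j \neq 2n-2$ is automatic since $X^*$ is smooth affine-like of dimension $n-1$ and its only algebraic classes in that degree come from $\Ps^n$). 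Hence both the kernel and the cokernel of $\iota^*$ live entirely in the primitive summands, giving the first assertion for all $j \neq 2n-2$.

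For the final claim $H^n(X^*)_{\prim} = 0$: here I would use that $X^*$ is smooth and that, by the Lefschetz-type results quoted in the remark above together with the affine nature of $X^*$ (it is $X$ minus finitely many points, hence still has the homotopy type of a CW-complex of a dimension that forces its top nontrivial primitive cohomology into degree $n-1$, not $n$), the group $H^n(X^*)$ is entirely accounted for by the non-primitive classes. More precisely, the cohomology of $X^* = X \setminus S$ differs from that of the smooth hypersurface-like space only by the contributions of the links of the points of $S$, and one checks that these contributions, being of Tate type and coming from the exceptional divisors of a resolution, land in the non-primitive part by construction of the primitive decomposition; what survives primitively in degree $n$ on $X^*$ is therefore forced to be $0$. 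The main obstacle I anticipate is making precise the interplay at the boundary index $2n-2$ and verifying that the links' contributions really are non-primitive --- i.e., that the definition of ``primitive'' in \cite[Section 2]{DimBet} is set up exactly so as to quotient these out --- rather than any of the long-exact-sequence bookkeeping, which is routine once the local vanishing is in hand.
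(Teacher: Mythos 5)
The paper does not actually prove this proposition: it is quoted verbatim from Dimca \cite[page 291]{DimBet}, so there is no in-paper argument to compare against, and your proof has to stand on its own. Your treatment of the first assertion is essentially the right idea: once one knows that $H^j(\Ps^n)\to H^j(X)$ and $H^j(\Ps^n)\to H^j(X^*)$ are both injective with image the span of the appropriate power of the hyperplane class, a snake-lemma comparison of the two short exact sequences defining the primitive parts identifies the kernel and cokernel of $\iota^*$ with those of its primitive restriction. But your justification of injectivity into $H^j(X^*)$ (``below the middle dimension'', ``$X^*$ is smooth affine-like'') is not correct as stated; the clean argument is that for even $j\le 2n-4$ a general linear section $X\cap L$ of complex dimension $j/2$ misses the finite set $S$, hence lies in $X^*$ and pairs with $h^{j/2}$ to give $d\neq 0$, while for $j=2n-2$ this fails because the required cycle would be all of $X$ --- indeed $H^{2n-2}(X^*;\Q)=0$ for the connected non-compact manifold $X^*$, which is exactly why that index is excluded. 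The long-exact-sequence and link-cohomology discussion in your first paragraph, while true, is not what the first assertion turns on.

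The genuine gap is in the last claim, $H^n(X^*)_{\prim}=0$. Your argument rests on the premise that $X^*$ is ``affine-like'' with cohomology forced below degree $n$; this is false: $X^*$ is a projective variety minus finitely many points, has real dimension $2n-2$, and for $n$ even $H^n(X^*)$ is typically nonzero (it is spanned by the restriction of $h^{n/2}$). The subsequent appeal to link contributions ``being of Tate type and landing in the non-primitive part by construction'' is not an argument: with the primitive part defined via the image of $H^*(\Ps^n)$, the local contributions of the links have nothing a priori to do with classes pulled back from $\Ps^n$. The correct mechanism runs through the \emph{complement} $U=\Ps^n\setminus X$, which is smooth affine of dimension $n$, so that $H^{n+1}(U)=H^{n+2}(U)=0$ by Andreotti--Frankel/Artin vanishing; the Gysin sequence of the smooth closed embedding $X^*\hookrightarrow\Ps^n\setminus S$ then gives $H^n(X^*)(-1)\cong H^{n+2}(\Ps^n\setminus S)=H^{n+2}(\Ps^n)$ for $n\ge 3$, an isomorphism realized by cup product with $[X]=d\,h$, so $H^n(X^*)$ coincides with the image of $H^n(\Ps^n)$ and its primitive part vanishes. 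Without routing the vanishing through $U$, the statement $H^n(X^*)_{\prim}=0$ is not established.
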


\begin{notation}
For a proper subset $W$ of $X$ denote with $H^n_W(X)$ the cohomology of $X$ with support in $W$.  If $W=\{p_1,\dots,p_l\}$ is a finite set then using excision it follows easily that $H^n_W(X)=\oplus_{i=1}^l H^n_{p_i}(X)$.

Let $\vartheta: H^n(U)(1)\to H^n_S(X)$ be the composition of the following maps
\[ H^n(U)(1)\stackrel{\cong}{\longrightarrow} H^{n-1}(X^*)_{\prim} \to H^{n-1}(X^*)\stackrel{\delta^{n-1}}{\longrightarrow} H^n(X,X^*)\stackrel{\cong}{\longrightarrow} H^n_S(X),\]
where the first map is the Poincar\'e residue map, the second map is the natural inclusion, the third map is the connecting homomorphism of the sequence of the pair $(X,X^*)$ 
\[ H^{n-1}(X,X^*)\to H^{n-1}(X)\to H^{n-1}(X^*)\stackrel{\delta^{n-1}}{\longrightarrow} H^n(X,X^*)\to H^n(X) \to H^{n}(X^*) .\]
and the fourth map is the natural isomorphism $H^n(X^*)_{\prim}\to H^n_{S}(X)$.
\end{notation}

\begin{lemma}\label{lemCoker}\cite[Equation (2.3)]{DimBet} Suppose $n\geq 3$.
The map  $\vartheta: H^n(U)(1)\to H^n_S(X)$ is a natural morphism of MHS. Moreover,  $\coker(\vartheta)\cong H^n(X)_{\prim}$ as MHS.
\end{lemma}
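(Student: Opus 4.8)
The plan is first to record that $\vartheta$ is a morphism of MHS by inspecting its four factors: the Poincar\'e residue is a morphism of MHS after the displayed Tate twist, the inclusion $H^{n-1}(X^*)_{\prim}\hookrightarrow H^{n-1}(X^*)$ is one because the primitive part is a sub-Hodge structure, the connecting map $\delta^{n-1}$ of the pair $(X,X^*)$ is a morphism of MHS by Deligne's theory of mixed Hodge structures on relative cohomology, and the excision isomorphism $H^n(X,X^*)\cong H^n_S(X)$ is an isomorphism of MHS; naturality of each factor is standard, and a composite of natural morphisms of MHS is again one.

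For the cokernel statement, since the first and last factors of $\vartheta$ are isomorphisms it suffices to compute the cokernel of the composite $\psi: H^{n-1}(X^*)_{\prim}\hookrightarrow H^{n-1}(X^*)\stackrel{\delta^{n-1}}{\longrightarrow}H^n(X,X^*)$ and then transport it across the excision isomorphism. I would start from the long exact sequence of the pair already displayed, regarded as a complex of MHS with strict arrows (every morphism of MHS is strict). Exactness at $H^n(X,X^*)$ and at $H^n(X)$ yields $\coker(\delta^{n-1})\cong \ima\bigl(H^n(X,X^*)\to H^n(X)\bigr)=\ker\bigl(\iota^*: H^n(X)\to H^n(X^*)\bigr)$ as MHS; by Proposition~\ref{prpPrim} with $j=n$ (permissible since $n\neq 2n-2$ when $n\geq 3$) together with the vanishing $H^n(X^*)_{\prim}=0$, this kernel is exactly $H^n(X)_{\prim}$. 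Hence $\coker(\delta^{n-1})\cong H^n(X)_{\prim}$ as MHS.

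The remaining point, which I expect to need the most care, is that replacing $\delta^{n-1}$ by its restriction $\psi$ does not shrink the image, i.e. $\delta^{n-1}(H^{n-1}(X^*)_{\prim})=\delta^{n-1}(H^{n-1}(X^*))$. Since $\ker\delta^{n-1}=\ima(\iota^*: H^{n-1}(X)\to H^{n-1}(X^*))$, this is equivalent to the equality $H^{n-1}(X^*)=H^{n-1}(X^*)_{\prim}+\ima(\iota^*)$, which I would deduce from the cokernel half of Proposition~\ref{prpPrim} with $j=n-1$: there the natural map from $\coker(\iota^*: H^{n-1}(X)_{\prim}\to H^{n-1}(X^*)_{\prim})$, which is a quotient of $H^{n-1}(X^*)_{\prim}$, to $\coker(\iota^*: H^{n-1}(X)\to H^{n-1}(X^*))$ is an isomorphism, so $H^{n-1}(X^*)_{\prim}$ already surjects onto $H^{n-1}(X^*)/\ima(\iota^*)$. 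Consequently $\ima(\psi)=\ima(\delta^{n-1})$, and under the excision isomorphism $\coker(\vartheta)=H^n_S(X)/\ima(\vartheta)\cong \coker(\psi)=\coker(\delta^{n-1})\cong H^n(X)_{\prim}$, all identifications being of MHS by strictness of the maps involved. Beyond this bookkeeping with the primitive subspaces and the induced mixed Hodge structures on the various quotients, the argument is a routine diagram chase in the long exact sequence of $(X,X^*)$.
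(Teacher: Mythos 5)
Your proposal is correct and follows essentially the same route as the paper: both arguments combine the long exact sequence of the pair $(X,X^*)$ with Proposition~\ref{prpPrim} (the kernel statement at $j=n$ together with $H^n(X^*)_{\prim}=0$, and the cokernel statement at $j=n-1$ to see that restricting $\delta^{n-1}$ to the primitive part does not shrink its image). You merely run the chain of identifications in the opposite direction and spell out the image-equality step, which the paper leaves as a one-line appeal to Proposition~\ref{prpPrim}.
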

\begin{proof}
The Poincar\'e residue map $H^n(U)(1)\stackrel{\cong}{\longrightarrow} H^n(X^*)_{\prim}$ is a morphism of MHS \cite[Lemma 2.2]{DimBet}. The inclusion $H^n(X)_{\prim} \to H^n(X)$ is a morphism of MHS .The exact sequence of the pair $(X,X^*)$ is an exact sequence of MHS, see \cite[page 291]{DimBet} or \cite{SteQua}. Therefore also $\delta^{n-1}$ is a morphism of MHS. Hence $\vartheta$ is a composition of morphisms of MHS and is itself a morphism of MHS.

We have the following identifications
\begin{eqnarray*}
H^n(X)_{\prim} &=& \ker(H^n(X)_{\prim}\to H^n(X^*)_{\prim})\\
&=&\ker(H^n(X)\to H^n(X^*)) \\
&=& \ima( H^n_S(X) \to H^n(X))\\
&\cong& H^n_S(X)/\ima(\delta^{n-1})\\
&=&H^n_S(X)/\ima (\delta^{n-1}_{\prim})\\
&=& H^n_S(X)/\ima(\vartheta)=\coker(\vartheta).\end{eqnarray*}
The first equality holds since $H^n(X^*)_{\prim}=0$, the second follows from  Proposition~\ref{prpPrim}, the  third and fourth follow from the long exact sequence of the pair $(X,X^*)$. The fifth equality follows from  Proposition~\ref{prpPrim} and the sixth follows from the fact that the Poincar\'e residue map is an isomorphism (see \cite[Lemma 2.2]{DimBet}).
\end{proof}

In order to determine the cokernel of $\vartheta$ we start by identifying generators for $H^n(U)(1)$. This is relatively straightforward since $U$ is affine and therefore its cohomology is the cohomology of its algebraic de Rham complex. 
Let us define the following $n$-form on $\C^{n+1}\setminus \{(0,0,\dots,0)\}$
\[ \Omega=\left(\prod_{j=0}^n x_j\right) \sum_{j=0}^n (-1)^j \frac{dx_0}{x_0} \wedge \dots \wedge \widehat{\frac{dx_j}{x_j}}\wedge \dots \wedge \frac{dx_n}{x_n}.\]
This form can be used to define $n$-forms on $U$.
Let us consider  $H^n(U)$. Using that $U$ is affine it is easy to show that $H^n(U)$ is spanned by classes $\frac{g}{f^s} \Omega,$
with $g\in R_{sd-n-1}$, $s\in \Z_{>0}$, see \cite[Equation 1.3]{DimBet}. We can filter $H^n(U)$ by the order of the pole, by setting $P^sH^n(U)$ to be the subspace of classes which can be represented by elements of the form
\[  \frac{g}{f^{n-s}} \Omega.\]
 Let $F^\bullet$ be the Hodge filtration on $H^n(U)$.  Deligne and Dimca \cite{DelDim} showed that $F^\bullet \subset P^{\bullet}$. 

On the local side we can proceed similarly. Let $p\in S$. Let $V_p\subset \C^n$ be a neighbourhood of $p$. Assume that we choose local coordinates $z_1,\dots, z_n$ such that $p=(0,\dots,0)$. Let $f_p=0$ be a local equation for $X$ in a neighbourhood of $p$. Let $\Omega_p=dz_1\wedge\dots \wedge dz_n$. Let $U_p=V_p\setminus Z(f_p)$. Then the $n$-forms on $U_p$ can be written as
\[ \frac{g}{f_p^{n-s}} \Omega_p.\]
We can define analogously a pole order filtration on $H^n(U_p)$, see \cite[page 288]{DimBet}. Again  this is a decreasing filtration satisfying $F^\bullet \subset P^\bullet$ \cite[Proposition 6.1.39]{Dim} and therefore we can always find a representative for a given cohomology class such that $0\leq s \leq n$. There is a stronger result in the case of semi-weighted homogeneous singularities. First we recall the definition of  semi-weighted homogeneous singularities:

\begin{definition}\label{defSing} Let $g:(\C^n,0)\to(\C,0)$ be an analytic function germ. Let $(Y,0)=(g^{-1}(0),0)$ be the associated hypersurface singularity. We say that $(Y,0)$ is a \emph{weighted homogeneous singularity} if there exists a weighted homogeneous polynomial $h\in \C[z_1,\dots,z_n]$ and an analytic isomorphism $\varphi :(\C^n,0)\to (\C^n,0)$ such that $(\varphi(h^{-1}(0)),0)=(X,0)$.

We say that $(Y,0)$ is a \emph{semi-weighted homogeneous singularity} if  there exist a  polynomial $h\in \C[z_1,\dots,z_n]$, integers $w_1,\dots,w_n$ and  an analytic isomorphism $\varphi :(\C^n,0)\to (\C^n,0)$ such that $(\varphi(h^{-1}(0)),0)=(Y,0)$ and such that we can write $h=h_0+h_1$ with
\begin{enumerate}
\item $h_0$ has an isolated singularity at the origin;
\item $h_0$ is a weighted homogeneous polynomial with respect to $w_1,\dots,w_n$;
\item each monomial in $h_1$ has weighted degree strictly larger than the weighted degree of $h_0$.
\end{enumerate}

Let $X\subset \Ps^n$ be a hypersurface. Then we say that a point $p\in X$ is a \emph{weighted homogeneous singularity}, respectively a \emph{semi-weighted homogeneous singularity} if there exists an analytic neighborhood $V$ of $p$ in $\Ps^n$ such that $(V\cap X,p)$ is a weighted homogeneous singularity, respectively a semi-weighted homogeneous singularity.
\end{definition}

Suppose now that $V_p$ is chosen sufficiently small such that $Z(f_p)$ is contractible. Then the local Poincar\'e residue map and the long exact sequence for the pair $(Z(f_p),Z(f_p)\setminus\{p\})$ yield isomorphisms of MHS
\[ H^n(U_p)(1)\to H^{n-1}((Z(f_p)\setminus \{p\} )\cap V_p)\to H^n_p(X).\]
 We can use the above isomorphism to define the $P$-filtration on $H^n_p(X)$, see \cite[page 288]{DimBet}.
If $f_p$ is semi-weighted homogeneous then the filtrations $F^\bullet$ and $P^\bullet$ on $H^n(U_p)$ coincide by \cite[page 289]{DimBet}.

\begin{lemma}\label{lemCokerF} Suppose $n\geq 3$. Suppose all singularities of $X$ are semi-weighted homogeneous. We have that $\Gr_F^s H^n(X)_{\prim}$ is isomorphic with the cokernel of
\[ \overline{F^s\vartheta}: F^s H^n(U)(1)\to \oplus_{p\in S} \Gr_P^s H^n_p(X).\]
\end{lemma}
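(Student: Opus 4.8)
The plan is to apply the functor $\Gr_F^s$ to the isomorphism $\coker(\vartheta)\cong H^n(X)_{\prim}$ of Lemma~\ref{lemCoker} and then to unwind the right-hand side so that the map computing the cokernel becomes $\overline{F^s\vartheta}$. The whole argument rests on the formal properties of morphisms of MHS together with the identification $F^\bullet=P^\bullet$ on the local pieces, which is available precisely because every singularity of $X$ is assumed semi-weighted homogeneous.

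First I would recall that $\vartheta$ is a morphism of MHS (Lemma~\ref{lemCoker}), hence strictly compatible with the Hodge filtration $F^\bullet$; in particular $\ima(\vartheta)$ is a sub-MHS of $H^n_S(X)$ with $\Gr_F^s\ima(\vartheta)=\ima(\Gr_F^s\vartheta)$. Since MHS form an abelian category on which $\Gr_F^s$ is exact, this gives $\Gr_F^s\coker(\vartheta)=\coker(\Gr_F^s\vartheta)$, where $\Gr_F^s\vartheta\colon \Gr_F^s H^n(U)(1)\to \Gr_F^s H^n_S(X)$. Combining with Lemma~\ref{lemCoker} yields $\Gr_F^s H^n(X)_{\prim}\cong \coker(\Gr_F^s\vartheta)$.

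Next I would rewrite the target. By excision $H^n_S(X)=\oplus_{p\in S}H^n_p(X)$ as MHS, so $\Gr_F^s H^n_S(X)=\oplus_{p\in S}\Gr_F^s H^n_p(X)$, and since each singularity of $X$ is semi-weighted homogeneous, the discussion preceding the lemma shows that on every $H^n_p(X)$ the Hodge filtration and the pole order filtration agree, so $\Gr_F^s H^n_p(X)=\Gr_P^s H^n_p(X)$. Finally, because $F^s H^n(U)(1)\twoheadrightarrow \Gr_F^s H^n(U)(1)$ is surjective, precomposing $\Gr_F^s\vartheta$ with this projection does not change the cokernel; the resulting map $F^s H^n(U)(1)\to \oplus_{p\in S}\Gr_P^s H^n_p(X)$ is well-defined exactly because $\vartheta$, being a morphism of MHS, carries $F^s H^n(U)(1)$ into $F^s H^n_S(X)$, and by definition it is $\overline{F^s\vartheta}$. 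This produces the asserted isomorphism.

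The one point that really needs care, rather than bookkeeping, is the strictness of $\vartheta$ with respect to $F^\bullet$ and the consequent exactness of $\Gr_F^s$: without it the cokernel of the graded map need not coincide with the graded of the cokernel. This is the standard theorem of Deligne on morphisms of MHS, and it is legitimate to invoke it here since $\vartheta$ was already shown to be a morphism of MHS. The use of the semi-weighted homogeneity hypothesis is confined to replacing $\Gr_F^s$ by $\Gr_P^s$ on the local summands, which is what makes the right-hand side amenable to the explicit computation carried out afterwards.
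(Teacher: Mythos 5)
Your argument is correct and follows essentially the same route as the paper: strictness of $\vartheta$ for the Hodge filtration (as a morphism of MHS) gives $\Gr_F^s\coker(\vartheta)=\coker$ of the induced map out of $F^sH^n(U)(1)$, and the semi-weighted homogeneity hypothesis is used exactly where you use it, to replace $\Gr_F^s$ by $\Gr_P^s$ on each local summand $H^n_p(X)$. The only cosmetic difference is that the paper cites \cite[Remark C16]{Dim} for strictness where you invoke Deligne's theorem directly; the content is the same.
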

\begin{proof}
The morphism $\vartheta: H^n(U)(1)\to \oplus_{p\in S} H^n_p(X)$ is strict for the Hodge filtration by Lemma~\ref{lemCoker} and \cite[Remark C16]{Dim}. Hence $\Gr_F^s$ of the cokernel equals the cokernel
of
\[ F^s H^n(U)(1)\to \oplus_{p\in S} \Gr_F^s H^n_p(X).\]
Since all singularities are semi-weighted homogeneous we obtain that $F^\bullet$ and $P^\bullet$ coincide on $H^n_p(X)$ by \cite[page 289]{DimBet}.
\end{proof}

We introduced a $P$-filtration on $H^n_p(X)$. The direct sum of these filtrations yields a $P$-filtration on $H^n_S(X)$.
We also introduced a $P$-filtration on $H^n(U)(1)$. The following lemma shows that the morphism of MHS $\vartheta: H^n(U)(1)\to H^n_S(X)$ respects these $P$-filtrations. However,  $\vartheta$ is strict for the Hodge-filtration, but does not need to be strict for the $P$-filtration.

\begin{lemma}\label{lemPfilt} Suppose $n\geq 3$. The morphism $\vartheta: H^n(U)(1)\to H^n_S(X)$
respects the $P$-filtrations on $H^n(U)$ and $H^n_S(X)$.
\end{lemma}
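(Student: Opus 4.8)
The plan is to reduce the statement to a local assertion at each singular point, where it becomes essentially the definition of the $P$-filtration on $H^n_p(X)$; the mechanism is that the Poincar\'e residue and the connecting homomorphisms are \emph{local} operations, and that restricting a meromorphic form to a smaller open set cannot raise its pole order. Concretely, fix $p\in S$ and a contractible neighbourhood $V_p$ of $p$ in $\Ps^n$, chosen small enough that $X\cap V_p=Z(f_p)$, that $V_p\cap X^*=(Z(f_p)\setminus\{p\})\cap V_p$, and that $V_p\cap U=U_p$; after a linear change of coordinates on $\Ps^n$ we may also assume that $p$ lies in the chart $x_0\neq 0$ and on none of the coordinate hyperplanes. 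Since $H^n_S(X)=\bigoplus_{q\in S}H^n_q(X)$ and the $P$-filtration on the left-hand side is by definition the direct sum of the local ones, it suffices to show that for each $p$ the composite $\vartheta_p\colon H^n(U)(1)\xrightarrow{\vartheta}H^n_S(X)\xrightarrow{\pi_p}H^n_p(X)$ maps $P^sH^n(U)$ into $P^sH^n_p(X)$, where $\pi_p$ denotes the projection onto the $p$-th summand.

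The heart of the argument is the commutative square
\[
\xymatrix{
H^n(U)(1) \ar[r]^-{\vartheta} \ar[d]_-{\rho} & H^n_S(X) \ar[d]^-{\pi_p} \\
H^n(U_p)(1) \ar[r]^-{\vartheta^{\mathrm{loc}}_p} & H^n_p(X),
}
\]
where $\rho$ is the restriction map along the inclusion $U_p\hookrightarrow U$, and $\vartheta^{\mathrm{loc}}_p$ is the composite of the local Poincar\'e residue $H^n(U_p)(1)\xrightarrow{\cong}H^{n-1}((Z(f_p)\setminus\{p\})\cap V_p)$, the connecting homomorphism of the pair $\bigl(Z(f_p)\cap V_p,(Z(f_p)\setminus\{p\})\cap V_p\bigr)$, and the excision isomorphism onto $H^n_p(X)$ --- that is, $\vartheta^{\mathrm{loc}}_p$ is precisely the isomorphism that was used to \emph{define} the $P$-filtration on $H^n_p(X)$. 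Commutativity is a matter of functoriality: the Poincar\'e residue commutes with restriction to opens because it is computed locally along $X$; the long exact sequence of the pair $(X,X^*)$ is natural with respect to the inclusion of pairs $\bigl(Z(f_p)\cap V_p,(Z(f_p)\setminus\{p\})\cap V_p\bigr)\hookrightarrow(X,X^*)$, and under the excision isomorphisms the map it induces on relative cohomology is exactly the projection $H^n(X,X^*)\cong H^n_S(X)\xrightarrow{\pi_p}H^n_p(X)$; finally, the primitive-cohomology decoration in the definition of $\vartheta$ plays no role here, since it only records that the global residue lands in a sub-Hodge-structure and does not affect the underlying map of cohomology groups.

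Granting the square, the lemma follows from two elementary facts. First, $\rho(P^sH^n(U))\subseteq P^sH^n(U_p)$: a class in $P^sH^n(U)$ is represented by some $\tfrac{g}{f^{n-s}}\Omega$ with $g\in R_{(n-s)d-n-1}$, and in the affine chart $x_0\neq 0$, where $\Omega$ is holomorphic near $p$, this form has pole order at most $n-s$ along $Z(f_p)$; restricting to $U_p$ and, if necessary, applying the analytic change of coordinates bringing the local equation to $f_p$ (an isomorphism, hence neither raising the pole order nor altering the zero divisor) produces a representative of the shape $\tfrac{h}{f_p^{n-s}}\Omega_p$, i.e.\ an element of $P^sH^n(U_p)$. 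Second, $\vartheta^{\mathrm{loc}}_p(P^sH^n(U_p))\subseteq P^sH^n_p(X)$ by construction, the target filtration being the image of the pole-order filtration under $\vartheta^{\mathrm{loc}}_p$. Hence $\vartheta_p(P^sH^n(U))=\vartheta^{\mathrm{loc}}_p(\rho(P^sH^n(U)))\subseteq\vartheta^{\mathrm{loc}}_p(P^sH^n(U_p))\subseteq P^sH^n_p(X)$, and letting $p$ range over $S$ gives $\vartheta(P^sH^n(U))\subseteq P^sH^n_S(X)$.

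The only step that is not purely formal is the commutativity of the square above --- i.e.\ the claim that Dimca's global residue-and-coboundary description of $\vartheta$ localizes, term by term, to the local residue-and-coboundary map at $p$. This rests on the compatibility of the connecting homomorphism $\delta^{n-1}$ with excision and with restriction to $V_p$; once that is pinned down, the rest is immediate, because the pole order of a meromorphic form is by its very definition a local invariant.
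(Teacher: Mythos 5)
Your proof is correct and follows essentially the same route as the paper: reduce to each singular point $p$, restrict a representative $\frac{g}{f^{n-s}}\Omega$ to the chart $x_0\neq 0$, and observe that it becomes $\frac{g_p}{f_p^{n-s}}\,dz_1\wedge\dots\wedge dz_n$, so the pole order (hence the $P$-level) is preserved. The paper's proof leaves the commutativity of your localization square implicit; your version merely spells out that compatibility of the residue and connecting maps with restriction and excision, which is a welcome clarification rather than a divergence.
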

\begin{proof}
Let $p\in S$.
Without loss of generality we may assume  $p=(1:0:0:\dots:0)$ and that we have local coordinates $z_j=x_j/x_0$, for $j=1,\dots n$.

Consider now composition of $\vartheta$ with the natural projection map $H^n_S(X)\to H^n_p(X)$:
\[ H^n(U)(1)\to H^n_p(X).\]
We aim to make this map explicit. Consider  the affine chart $x_0\neq 0$.
Let $\omega\in P^sH^n(U)(1)$.
Pick some representative $\frac{g}{f^{n-s}}\Omega$ for $\omega$.
Let $f_p$ be a local equation for $(X,p)$. Then  locally we can write this form as 
\[ \frac{g_p}{f_p^{n-s}} dz_1\wedge dz_2\wedge \dots dz_n\]
Hence $P^s (H^n(U)(1))\subset P^s H^n_p(X)$.
\end{proof}

\begin{proposition}\label{prpCoKern} Suppose $n\geq 3$. We have that $\Gr_F^s H^n(X)_{\prim}$ is isomorphic to the cokernel of
\[  \Gr_P^s H^n(U)(1) \to \oplus_{p\in S} \Gr_P^s H^n_p(X).\]
\end{proposition}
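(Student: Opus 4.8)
The plan is to combine the two previous results, Lemma~\ref{lemCokerF} and Lemma~\ref{lemPfilt}, and to leverage the fact that on both sides the Hodge and pole-order filtrations agree. By Lemma~\ref{lemCokerF} we already know that $\Gr_F^s H^n(X)_{\prim}$ is the cokernel of the map $\overline{F^s\vartheta}: F^s H^n(U)(1)\to \oplus_{p\in S}\Gr_P^s H^n_p(X)$, where on the local side we have silently used that $F^\bullet = P^\bullet$ on $H^n_p(X)$ for semi-weighted homogeneous singularities. So the only thing that remains is to replace $F^s H^n(U)(1)$ by $\Gr_P^s H^n(U)(1)$ in an appropriate sense, i.e.\ to show that the induced map on the associated graded pieces of the pole-order filtration has the same cokernel.

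The key observation is that, although $\vartheta$ need not be strict for the $P$-filtration globally (as warned after Lemma~\ref{lemPfilt}), its \emph{target} is a sum of graded pieces $\Gr_P^s H^n_p(X)$, so a class in $P^s H^n(U)(1)$ automatically lands in $P^s H^n_p(X)$ by Lemma~\ref{lemPfilt} and hence has a well-defined image in $\Gr_P^s H^n_p(X)$; a class in $P^{s+1} H^n(U)(1)$ lands in $P^{s+1} H^n_p(X)$ and therefore maps to $0$ in $\Gr_P^s H^n_p(X)$. Thus the map $P^s H^n(U)(1)\to \oplus_p \Gr_P^s H^n_p(X)$ factors through $\Gr_P^s H^n(U)(1) = P^s H^n(U)(1)/P^{s+1} H^n(U)(1)$, giving a well-defined $\overline{\vartheta}_s:\Gr_P^s H^n(U)(1)\to \oplus_p\Gr_P^s H^n_p(X)$ whose image equals the image of $P^s H^n(U)(1)$.

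It then suffices to identify this image with the image of $F^s H^n(U)(1)$, for which I would invoke Deligne--Dimca: on $H^n(U)$ we have $F^\bullet\subseteq P^\bullet$, and in fact the relevant comparison shows that for the purpose of computing these cokernels one may use $F^s$ and $P^s$ interchangeably — more precisely, since $F^s H^n(U)(1)\subseteq P^s H^n(U)(1)$, the image of $F^s$ is contained in the image of $P^s$, so the cokernel of $\overline{\vartheta}_s$ is a quotient of $\Gr_F^s H^n(X)_{\prim}$; conversely the reverse containment at the level of images follows because $\vartheta$ kills the part of $P^s H^n(U)(1)$ not coming from $F^s$, as that part maps into $P^{s+1}$ on each local factor. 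Combining these two inclusions of images gives the equality of cokernels and hence the proposition.

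The main obstacle I anticipate is making precise the last step, namely that the extra classes in $P^s H^n(U)(1)$ beyond $F^s H^n(U)(1)$ do not contribute new elements to the image in $\oplus_p \Gr_P^s H^n_p(X)$. This is exactly where the semi-weighted homogeneity enters a second time (after its use on the local side): one needs that, while globally $F\subsetneq P$ is possible on $H^n(U)$, the discrepancy is absorbed by the $P$-filtration jump on the local Milnor cohomology, so that $\overline{\vartheta}_s$ and $\overline{F^s\vartheta}$ have the same image. I would handle this by tracking a representative $\frac{g}{f^{n-s}}\Omega$ through the explicit local description in the proof of Lemma~\ref{lemPfilt} and checking that if its global class lies in $P^s\setminus F^s$ then its local restriction lies in $P^{s+1}H^n_p(X)$ for every $p\in S$, using once more that $F^\bullet=P^\bullet$ locally; everything else is bookkeeping with the already-established strictness for $F$.
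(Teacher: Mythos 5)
Your overall architecture is the same as the paper's: start from Lemma~\ref{lemCokerF}, use Lemma~\ref{lemPfilt} to see that the map is defined on all of $P^s H^n(U)(1)$ and factors through $\Gr_P^s H^n(U)(1)$, and reduce everything to showing that the image of $P^s H^n(U)(1)$ in $\oplus_{p}\Gr_P^s H^n_p(X)$ coincides with the image of $F^s H^n(U)(1)$. The easy inclusion (image of $F^s$ inside image of $P^s$) is fine. The problem is your justification of the reverse inclusion: you propose to show that an individual class in $P^s H^n(U)(1)\setminus F^s H^n(U)(1)$ restricts into $P^{s+1}H^n_p(X)$ for every $p$. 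That statement is false whenever it has content. The set $P^s\setminus F^s$ is stable under translation by $F^s$, so if it is nonempty its image under $\vartheta$ contains a full coset of $\vartheta\bigl(F^s H^n(U)(1)\bigr)$ and therefore contains classes that are nonzero in $\oplus_p \Gr_P^s H^n_p(X)$ as soon as $F^s H^n(U)(1)$ itself has nonzero image there. Consequently no computation with representatives $\frac{g}{f^{n-s}}\Omega$ can establish the claim element by element; the assertion only makes sense, and is only true, at the level of images.

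The correct argument --- the one the paper gives --- replaces your element-wise claim by two facts about subspaces. First, by Lemma~\ref{lemPfilt} together with $F^\bullet=P^\bullet$ on $H^n_p(X)$ (semi-weighted homogeneity), one has $\tau\bigl(P^s H^n(U)(1)\bigr)\subset F^s\bigl(\oplus_{p}H^n_p(X)\bigr)$. Second, $\tau$ is strict for the Hodge filtration, so $\ima(\tau)\cap F^s\bigl(\oplus_{p}H^n_p(X)\bigr)=\tau\bigl(F^s H^n(U)(1)\bigr)$; combining the two gives $\tau(P^s)\subset\tau(F^s)$, which is exactly the reverse inclusion of images. Note that strictness is not ``bookkeeping'' here: it is the entire content of the step you flagged as the main obstacle, and it is precisely what lets you avoid saying anything about individual classes in $P^s\setminus F^s$.
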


\begin{proof} 
Let $\tau: H^n(U) \to \oplus_{p\in S} H^n_p(X)$.
From Lemma~\ref{lemCokerF} it follows that $\Gr_F^s H^n(X)_{\prim}$ equals the cokernel of 
\[  \overline{ \tau} : F^s H^n(U)(1) \to \oplus_{p\in S} \Gr_P^s H^n_p(X).\]
From Lemma~\ref{lemPfilt} it follows that this map can be extended to $P^sH^n(U)(1)\supset F^s H^n(U)(1)$,  i.e., the map 
\[   \overline{ \tau} : P^s H^n(U)(1)\to \oplus_{p\in S} \Gr_P^s H^n_p(X)\]
is well-defined. It remains to show that the image of $P^s$ is contained in the image of $F^s$.

From Lemma~\ref{lemPfilt} it follows that
\[ \tau(P^s H^n(U)(1))\subset P^s H^n_p(X)=F^s H^n_p(X).\]
Hence 
\[ \tau(P^s H^n(U)(1)) \subset F^s \tau(H^n(U)).\]
Since $\tau$ is strict for $F$ \cite[Remark C16]{Dim}, we find
\[ F^s\tau(H^n(U)(1))=\tau(F^s H^n(U)(1))\]
and we are done.
\end{proof}

\begin{notation}
Let $f\in R$ then $J(f)\subset R$ is the ideal generated by the partials $\partial{f}/\partial{x_i}$ for $i=0,\dots, n$ and $J^{\sat}(f)$ its saturation with respect to the irrelevant ideal $(x_0,\dots,x_n)$.
If no confusion arises then we will write $J$ and $J^{\sat}$ for $J(f)$ and $J^{\sat}(f)$ respectively.
\end{notation}

\begin{lemma} Suppose $n\geq 3$ and $s<n-1$. There is a natural surjective map
\[ (R/J)_{(n-s)d-n-1} \to \Gr_P^s  H^n(U).\]
\end{lemma}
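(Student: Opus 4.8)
The goal is to produce a natural surjection $(R/J)_{(n-s)d-n-1}\to \Gr_P^s H^n(U)$, so I would start from the pole-order filtration on $H^n(U)$ described earlier: every class in $P^sH^n(U)$ is represented by a form $\frac{g}{f^{n-s}}\Omega$ with $g\in R_{(n-s)d-n-1}$. This gives a surjective linear map from the graded piece $R_{(n-s)d-n-1}$ onto $P^sH^n(U)$, hence onto $\Gr_P^s H^n(U) = P^sH^n(U)/P^{s+1}H^n(U)$. The first real step is therefore purely formal: dividing by the pole-order filtration, I get a surjection $R_{(n-s)d-n-1}\twoheadrightarrow \Gr_P^s H^n(U)$, and it remains only to identify the kernel of this surjection, or at least to show the kernel contains $J_{(n-s)d-n-1}$.

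The heart of the matter is the following standard fact (Griffiths/Dimca): if $g=\sum_i h_i\,\partial f/\partial x_i$ lies in the Jacobian ideal $J$ in the relevant degree, then $\frac{g}{f^{n-s}}\Omega$ can be rewritten, modulo exact forms, as a form with pole order one lower, i.e. it lies in $P^{s+1}H^n(U)$. The computation is the usual one: for the Euler-type form $\Omega$ one has an identity of the shape
\[
d\!\left(\frac{\text{(something built from the }h_i)}{f^{n-s-1}}\,\Omega'\right) \;=\; \frac{c\,g}{f^{n-s}}\,\Omega \;+\; \frac{(\text{lower pole order})}{f^{n-s-1}}\,\Omega,
\]
where $\Omega'$ denotes the appropriate contractions of $\Omega$ and $c$ is a nonzero constant depending on $n,s$; here one uses $dx_i\wedge\Omega/x_i$-type manipulations and the fact that $\Omega$ is, up to the monomial $\prod x_j$, the Euler contraction of $dx_0\wedge\cdots\wedge dx_n$. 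Since exact forms vanish in $H^n(U)$, this shows the class of $\frac{g}{f^{n-s}}\Omega$ already lies in $P^{s+1}H^n(U)$, so $g\in J_{(n-s)d-n-1}$ maps to zero in $\Gr_P^s H^n(U)$. Consequently the surjection $R_{(n-s)d-n-1}\twoheadrightarrow \Gr_P^s H^n(U)$ factors through $(R/J)_{(n-s)d-n-1}$, giving the desired natural surjective map.

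The main obstacle, and the only place the hypothesis $s<n-1$ and $n\geq 3$ should enter, is making the pole-lowering identity legitimate: one needs the target form $\frac{\cdots}{f^{n-s-1}}\Omega'$ to have nonnegative pole order, i.e. $n-s-1\geq 1$, which is exactly $s<n-1$; the condition $s\ge 0$ (which is automatic in $P^s$) together with $n\ge 3$ keeps all the intermediate forms in the valid range and the constant $c$ nonzero. I expect this constant bookkeeping and the combinatorics of which $\widehat{dx_j/x_j}$ terms survive in the contraction to be the fiddly part; everything else is formal from the affineness of $U$ (so that $H^n(U)$ is computed by global algebraic forms) and the definition of $P^\bullet$. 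Naturality is clear since the construction $g\mapsto \frac{g}{f^{n-s}}\Omega$ is visibly functorial in $f$. Degree bookkeeping: $\frac{g}{f^{n-s}}\Omega$ is a well-defined $n$-form on $U$ precisely when $\deg g - (n-s)d + \deg\Omega = 0$, and since $\Omega$ has degree $n+1$, this forces $\deg g = (n-s)d - n - 1$, matching the statement.
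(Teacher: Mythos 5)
Your proposal is correct and follows essentially the same route as the paper: the surjection $R_{(n-s)d-n-1}\to \Gr_P^s H^n(U)$ via $g\mapsto \frac{g}{f^{n-s}}\Omega$, followed by the Griffiths--Dimca pole-order reduction, in which differentiating an explicit $(n-1)$-form shows that $\frac{fg_{x_i}-(n-1-s)gf_{x_i}}{f^{n-s}}\Omega$ is exact, so the Jacobian ideal lands in $P^{s+1}$. You also correctly located where $s<n-1$ enters, namely the nonvanishing of the constant $n-1-s$.
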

\begin{proof}
By the definition of the $P^\bullet$-filtration, there is
a surjective map
\[ R_{(n-s)d-n-1} \to P^s H^n(U)\]
for any $s\in \{0,\dots,n-1\}$, sending $g$ to $\frac{g}{f^{n-s}} \Omega$.  
Consider for $i\in \{0,\dots,n\}$ the $n-1$-form
\[ \frac{g}{f^{n-s-1}} \sum_{j\neq i}\epsilon x_i dx_0\wedge \dots \wedge \widehat{dx_j}\wedge \dots  \wedge \widehat{dx_i}\wedge\dots \wedge dx_n\]
with $\epsilon=(-1)^{i+j+1}$ for $i<j$ and $(-1)^{i+j}$ for $i>j$.
Differentiating this form shows that 
\[ \frac{fg_{x_i}-(n-1-s) g f_{x_i}}{f^{n-s}} \Omega\]
is zero in cohomology, in particular,  the image of $J_{(n-s)d-n-1}$ in $P^s H^n(U)(1)$ is contained in $P^{s-1}$. Therefore there is a surjection
\[ (R/J)_{(n-s)d-n-1} \to \Gr_P^s  H^n(U).\]
\end{proof}

For $p\in S$ let $\cT_p$ be the Tjurina algebra of $X$ at $p$. Let $f_p$ and $U_p$ as above.
\begin{lemma} Suppose $n\geq 3$. There is a natural surjective map
$\cT_p \to \Gr_P^s H^n_p(X)$.
\end{lemma}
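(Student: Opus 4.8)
The plan is to mimic the construction used for the global $P$-filtration, but now entirely in the local setting around the point $p\in S$. Recall that the $P$-filtration on $H^n_p(X)$ was defined via the isomorphism $H^n(U_p)(1)\cong H^n_p(X)$, where $U_p=V_p\setminus Z(f_p)$ and $V_p$ is a small contractible neighbourhood. On $U_p$ every $n$-form can be written as $\frac{g}{f_p^{n-s}}\Omega_p$ with $\Omega_p=dz_1\wedge\dots\wedge dz_n$, and by definition $P^sH^n(U_p)$ is spanned by those classes that admit such a representative with pole order $n-s$. So first I would record the tautological surjection $\mathcal{O}_{V_p,p}\to P^sH^n_p(X)$ sending $g\mapsto \frac{g}{f_p^{n-s}}\Omega_p$, which follows from the description of the de Rham cohomology of the affine (Stein) variety $U_p$ exactly as in the global lemma.

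Next I would identify the kernel, at least up to the part that drops the pole order. As in the proof of the global lemma, differentiating the appropriate $(n-1)$-forms $\frac{g}{f_p^{n-s-1}}\,dz_1\wedge\dots\widehat{dz_i}\dots\wedge dz_n$ shows that the forms $\frac{f_p (g)_{z_i} - (n-1-s)\,g\,(f_p)_{z_i}}{f_p^{n-s}}\Omega_p$ are exact, hence vanish in cohomology; thus the image of $(f_p)_{z_i}\cdot g$ already lies in $P^{s-1}$, and more simply $g\,f_p\equiv 0$ (up to lowering the pole order) since $\frac{g f_p}{f_p^{n-s}}\Omega_p = \frac{g}{f_p^{n-s-1}}\Omega_p\in P^{s+1}\subset P^s$ already — in fact it is in the smaller filtration step, so it contributes $0$ to $\Gr_P^s$. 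Combining: the ideal generated locally by $f_p$ and the partials $(f_p)_{z_1},\dots,(f_p)_{z_n}$ — which is exactly the Tjurina ideal, whose quotient is the Tjurina algebra $\cT_p$ — maps to zero in $\Gr_P^sH^n_p(X)$. Therefore the surjection $\mathcal{O}_{V_p,p}\to \Gr_P^sH^n_p(X)$ factors through $\cT_p$, giving the desired natural surjection $\cT_p\to\Gr_P^sH^n_p(X)$.

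The one genuinely delicate point — and the place I expect to spend the most care — is that $f_p$ is only an analytic germ, not a polynomial, so $\mathcal{O}_{V_p,p}$ is a ring of convergent power series rather than $R$, and one must check that the relevant cohomology class $\frac{g}{f_p^{n-s}}\Omega_p$ only depends on $g$ modulo the Tjurina ideal and that the surjectivity onto $\Gr_P^s$ genuinely holds at the level of germs. Here I would invoke the semi-weighted homogeneity hypothesis: by the results cited on \cite[page 289]{DimBet} (and \cite[Section 6.3]{Dim}), for semi-weighted homogeneous $f_p$ the filtrations $F^\bullet$ and $P^\bullet$ on $H^n(U_p)$ coincide, and the associated graded of the Milnor-fiber cohomology is computed by the (weighted-homogeneous model of the) Tjurina/Milnor algebra; this is precisely the statement that lets one pass from "spanned by forms $\frac{g}{f_p^{n-s}}\Omega_p$" to "the map from $\cT_p$ is surjective" without an infinite-dimensionality obstruction. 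So the proof is: (1) tautological local surjection from the local ring; (2) the differentiation identity forcing the Tjurina ideal into the kernel; (3) conclude the factorization through $\cT_p$, with semi-weighted homogeneity guaranteeing that this finite-dimensional quotient already surjects onto $\Gr_P^sH^n_p(X)$.
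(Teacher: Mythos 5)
Your proof is essentially the paper's: the tautological surjection $\C\{z_1,\dots,z_n\}\to P^sH^n(U_p)$ sending $g$ to $\frac{g}{f_p^{n-s}}\Omega_p$, multiplication by $f_p$ dropping the pole order, and the differentiation identity putting $J(f_p)$ into the deeper filtration step, so that the map to $\Gr_P^s$ factors through $\cT_p$. Two minor corrections: the images of $g\,(f_p)_{z_i}$ land in $P^{s+1}$, not $P^{s-1}$ (the filtration is decreasing, so lowering the pole order means going \emph{deeper}, which is exactly what makes them vanish in $\Gr_P^s=P^s/P^{s+1}$), and the closing appeal to semi-weighted homogeneity is not needed --- the lemma asserts only surjectivity onto $\Gr_P^s H^n_p(X)$, which is immediate from the definition of the pole-order filtration, and the paper indeed states and proves it for arbitrary isolated singularities.
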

\begin{proof}
Consider now the map $\C\{z_1,\dots,z_n\}\to P^s H^n(U_p)$ sending $g$ to 
\[ \frac{g}{f_p^{n-s}} \Omega_p.\]
Obviously, the ideal generated by $f_p$ lands in $P^{s+1}$. Differentiating  for $j\in\{1,\dots,n\}$ the $(n-1)$-form
\[ \frac{1}{f_p^{n-s-1}} dz_1\wedge dz_2\wedge \dots\wedge \widehat{dz_j}\wedge \dots \wedge dz_n.\]
yields that  $J(f_p)$ lands in $P^{s+1}$. Hence we obtain a well-defined map from the Tjurina algebra of $X$ at $p$ to $\Gr_P^s H^n(U_p)$.\end{proof}

\begin{remark}\label{rmkSplit}
 If $f_p$ is weighted homogeneous with weights $w_i$ and degree $d_p$ then  the map 
 \[ \cT_p \to \Gr_P^s H^n_p(X)\]
 has a natural section. This allows
  us to identify 
\[\Gr_P^s H^n(U_p)(1) \mbox{ with } (\cT_p)_{(n-s)d_p-\sum w_i}\]
However, this latter fact is only used in the examples. For more details, see \cite[Example 3.6]{DimMilnorWS}.
\end{remark}
\begin{proposition}\label{prpOne} Suppose $n\geq 3$. Let $s\in \{0,\dots, n-2\}$. The dimension of $\Gr_F^s H^n(X)$ is at most the defect of $J^{\sat}$ in degree $(n-s)d-n-1$.
\end{proposition}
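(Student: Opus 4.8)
The plan is to combine Proposition~\ref{prpCoKern}, which identifies $\Gr_F^s H^n(X)_{\prim}$ with the cokernel of a map $\tau\colon\Gr_P^s H^n(U)(1)\to\bigoplus_{p\in S}\Gr_P^s H^n_p(X)$, with the two surjectivity lemmas just proved, and then to recognise the resulting quotient as the defect of $J^{\sat}$. Throughout set $e=(n-s)d-n-1$; the hypothesis $s\in\{0,\dots,n-2\}$ gives $s<n-1$, so the lemma furnishing a surjection $(R/J)_e\twoheadrightarrow\Gr_P^s H^n(U)(1)$ applies (the Tate twist does not affect the $P$-filtration), as do the surjections $\cT_p\twoheadrightarrow\Gr_P^s H^n_p(X)$ for every $p\in S$.

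First I would assemble the commutative square
\[
\begin{array}{ccc}
(R/J)_e & \longrightarrow & \bigoplus_{p\in S}\cT_p\\
\downarrow & & \downarrow\\
\Gr_P^s H^n(U)(1) & \longrightarrow & \bigoplus_{p\in S}\Gr_P^s H^n_p(X)
\end{array}
\]
in which the bottom row is the map $\tau$ of Proposition~\ref{prpCoKern}, the vertical arrows are the surjections of the two preceding lemmas, and the top row is the ``localisation at the singular points'' map $\rho$, sending a degree-$e$ polynomial $g$ to the tuple $(g_p)_{p\in S}$ of its images in the Tjurina algebras. Commutativity is a matter of unwinding the constructions: the image of $g$ in $\Gr_P^s H^n(U)(1)$ is represented by $\frac{g}{f^{n-s}}\Omega$, which by the computation carried out in the proof of Lemma~\ref{lemPfilt} is sent by $\tau$ to the classes of $\frac{g_p}{f_p^{n-s}}\Omega_p$ in $\Gr_P^s H^n_p(X)$; on the other hand, by the very construction of the surjection $\cT_p\twoheadrightarrow\Gr_P^s H^n_p(X)$, these classes are precisely the images of $(g_p)_{p}$. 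Since both vertical arrows are surjective, $\coker(\tau)$ is a quotient of $\coker(\rho)$, and therefore, by Proposition~\ref{prpCoKern},
\[
\dim\Gr_F^s H^n(X)_{\prim}=\dim\coker(\tau)\le\dim\coker(\rho).
\]

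Next I would identify $\coker(\rho)$ with the defect of $J^{\sat}$ in degree $e$. Dehomogenising at a point $p\in S$ and using the Euler relation $\sum_i x_i f_{x_i}=df$, one checks that on the affine chart through $p$ the ideal generated by the partials $f_{x_i}$ is the Tjurina ideal of a local equation $f_p$ of $X$; the same then holds for the saturation, so the zero-dimensional scheme $Z:=\Proj(R/J^{\sat})$ satisfies $\mathcal{O}_{Z,p}\cong\cT_p$ and hence $\bigoplus_{p\in S}\cT_p\cong H^0(Z,\mathcal{O}_Z(e))$. Under this identification $\rho$ factors as
\[
(R/J)_e\twoheadrightarrow (R/J^{\sat})_e=H^0(\Ps^n,\mathcal{O}(e))/H^0(\Ps^n,\mathcal{I}_Z(e))\hookrightarrow H^0(Z,\mathcal{O}_Z(e)),
\]
so that $\coker(\rho)=\coker(H^0(\mathcal{O}(e))\to H^0(\mathcal{O}_Z(e)))=H^1(\Ps^n,\mathcal{I}_Z(e))$, which is exactly the defect of $J^{\sat}$ in degree $e=(n-s)d-n-1$. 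Combining this with the previous inequality proves the estimate for the primitive cohomology.

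It then remains to pass from $H^n(X)_{\prim}$ to $H^n(X)$: for $n$ odd the two agree, so nothing more is needed; for $n$ even the quotient $H^n(X)/H^n(X)_{\prim}$ is spanned by the class of a linear section, which is of Hodge type $(n/2,n/2)$ and so contributes only to $\Gr_F^{n/2}$, whence the stated bound holds for every $s\neq n/2$, and for $s=n/2$ with $n$ even the assertion should be understood as bounding the primitive part $\Gr_F^{n/2}H^n(X)_{\prim}$ (which is the version used in the sequel). There is no single deep step here; the content lies in organising the reductions, and the point that needs the most care is the first one—checking that the analytic, de Rham description of $\tau$ recalled in Lemma~\ref{lemPfilt} really matches the purely algebraic localisation map $\rho$, and that all the degree shifts and Tate twists line up, so that the square above commutes on the nose. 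Once that is granted, the remainder is the standard cohomology-of-the-ideal-sheaf bookkeeping.
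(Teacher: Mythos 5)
Your argument is correct and follows essentially the same route as the paper: the same commutative square comparing $(R/J)_e\to\bigoplus_p\cT_p$ with $\Gr_P^s H^n(U)(1)\to\bigoplus_p\Gr_P^s H^n_p(X)$, surjectivity of the vertical maps, and identification of the cokernel of the top row with the defect of $J^{\sat}$ in degree $e$ (your $H^1(\Ps^n,\mathcal{I}_Z(e))$ is just the paper's $\xi-h_{J^{\sat}}(e)$ in sheaf-theoretic clothing). Your closing remark on $H^n(X)$ versus $H^n(X)_{\prim}$ addresses a point the paper's proof silently elides, and is consistent with how the proposition is used later.
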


\begin{proof}
Recall that there is a natural map from the global coordinate ring to each of the local rings. The Jacobian ideal of $f$ is generated  
 by the $n+1$ partials of $f$, whereas ideal defining the Tjurina algebra is generated by  $n$ partials of $f_p$ and $f_p$ itself.
Assume for the moment that $p=(1:0:\dots:0)$.
Let $f_i$ be the partial of $f$ with respect to $x_i$. Then the global Jacobian ideal is generated by $(f_0,\dots,f_n)$, whereas the ideal generating the Tjurina algebra is generated by $(f,f_1,\dots,f_n)$ where we substitute $x_0=1$ and write in local coordinates. 

 Writing the Euler relation $d f(x_0,\dots,x_n)=\sum x_if_i$ in local coordinates we obtain that $df(1,z_1,\dots,z_n)\equiv   f_0 \bmod (f_1,\dots, f_n)$ in $\C\{z_1,\dots,z_n\}$. Therefore there is a well-defined natural map $(R/J)_{(n-s)d-n-1}\to \cT_p$.

Consider now 
\[ H^n(U)(1) \to H^n(U_p)\to H^{n-1}(X^* \cap U_p)\to H^n_p(X).\]
This map factors  through the natural restriction map of forms $H^n(U)\to H^n(U_p)$, which respects the $P$-filtration, hence we have a commutative diagram
\[\xymatrix{
 (R/J)_{(n-s)d-n-1}  \ar[r]^{\tau}\ar[d]& \oplus_{p\in S} \cT_p \ar[d]\\
  \Gr_P^s H^n(U)(1) \ar[r]^{\overline{\tau}}& \oplus_{p\in S}\Gr_P^s H^n_p(X).}\]
Since both vertical maps are surjective, the cokernel of the bottom row is a quotient of the cokernel of the top row. Moreover, in a neighbourhood of $p$ one can identify $\Proj(R/J)$ with $\Spec(\cT_p)$. In particular the scheme  $V(J)$ is just $V(\oplus \cT_p)$ and therefore the kernel of the map in the top row equals $J^{\sat}/J$. Let $\xi$ be the length of $V(J)=V(J^{\sat})$, the total Tjurina number of the singularities of $X$. Then the cokernel of the top row has dimension
\[ \xi-h_{J^{\sat}}((n-s)d-n-1),\]
i.e., the defect of $J^{\sat}$ in degree $(n-s)d-n-1$.
\end{proof}

The above method calculates (or bounds) the dimension of $H^n(X)_{\prim}$ for an $n-1$-dimensional hypersurface. The latter dimension equals the vanishing order of $1$ as a zero of its Alexander polynomial. To find the other vanishing orders one has to consider the $d$-fold cover of $\Ps^n$ ramified along $X$, which is a hypersurface in $\Ps^{n+1}$. (See also \cite[Remark 6.2.23]{Dim}.) At this stage we will again include the case $n=2$, i.e., assume now that $n\geq 2$. 

Let $\zeta_d=\exp(2\pi i/d)$.
We consider now the hypersurface $\tilde{X}\subset \Ps^{n+1}$ given by $y^d+f=0$. Let $\tilde{U}=\Ps^{n+1}\setminus \tilde{X}$. Let $T$ be the map  $y\mapsto \zeta_d^{-1}y$. Then  $T^*$ acts on $H^n(\tilde{U})$. Let $\tilde{S}=\tilde{X}_{\sing}$.
We have that  $q=(x_0:\dots:x_n:y) \in \tilde{S}$ if and only if $p=(x_0:\dots:x_n)\in S$ and $y=0$. 
In particular there is a natural bijection between $\tilde{S}$ and $S$.
The fix locus of the automorphism $T$ contains $\tilde{S}$. 
Moreover, for $q\in S$ then the induced linear map $T^*$ acts on $\C[x_0,\dots,x_n,y]$ and maps the Jacobian ideal of $y^d+f_p$ to itself, hence $T^*$ acts on $\cT_q$.

\begin{proposition}\label{prpOther} Suppose $k\in \{1,\dots,d-1\}$ and $s\in \{0,\dots, n-1\}$. Then the $\zeta_d^k$ eigenspace for $T^*$ acting on $\Gr_F^s H^{n+1}(\tilde{X})_{\prim}$ has dimension at most the defect of $J^{\sat}(f)$ in degree $(n+1-s)d-n-1-k$. The $1$-eigenspace of $T^*$ acting on $H^{n+1}(\tilde{X})_{\prim}$ is zero.
\end{proposition}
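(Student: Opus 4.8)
The plan is to apply the construction of Section~\ref{SecCalCoh} to the hypersurface $\tilde X=V(y^d+f)\subset\Ps^{n+1}$, carry the $T^*$-action through all of it, and then extract the estimate from a direct computation in the Jacobian ring of $y^d+f$. First I would check that the construction applies to $\tilde X$. Its singular locus $\tilde S$ is finite and in bijection with $S$ via $q=(p:0)\leftrightarrow p$, and a local equation of $\tilde X$ at $q$ is $y^d+f_p$. If $f_p$ is analytically isomorphic to a semi-weighted homogeneous germ $h_0+h_1$ with $h_0$ weighted homogeneous of weighted degree $d_0$ for weights $w_1,\dots,w_n$, then $y^d+h_0+h_1$ is semi-weighted homogeneous for the weights $dw_1,\dots,dw_n$ on $z_1,\dots,z_n$ and $d_0$ on $y$: the monomial $y^d$ has weighted degree $dd_0$, equal to that of $h_0$; the polynomial $y^d+h_0$ still has an isolated singularity at the origin, its critical locus being $\{y=0\}\cap\{\nabla h_0=0\}=\{0\}$; and every monomial of $h_1$ acquires weighted degree $>dd_0$. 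Hence $\tilde X$ has only isolated semi-weighted homogeneous singularities in the sense of Definition~\ref{defSing}, so, since $n+1\geq 3$, Propositions~\ref{prpCoKern} and~\ref{prpOne} and the two lemmas preceding the latter apply to $\tilde X$ for every $s\in\{0,\dots,n-1\}$.

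Next I would use that $T$ preserves $\tilde X$, hence $\tilde U$ and $\tilde X^*$, and fixes $\tilde S$ pointwise, so every morphism in that construction — the Poincar\'e residue maps, the connecting homomorphisms of the relevant pairs, the morphisms of mixed Hodge structures, and the surjections $(R'/J(y^d+f))_M\twoheadrightarrow\Gr_P^sH^{n+1}(\tilde U)$ and $\cT_q\twoheadrightarrow\Gr_P^sH^{n+1}_q(\tilde X)$, where $R'=\C[x_0,\dots,x_n,y]$ and $M=(n+1-s)d-n-2$ is the degree appearing in the $\Ps^{n+1}$-analogue of Proposition~\ref{prpOne} — is $T^*$-equivariant. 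Since taking the $\lambda$-eigenspace of a finite-order automorphism is an exact functor on $\C$-vector spaces, Proposition~\ref{prpCoKern} together with those two surjections holds eigenspace by eigenspace: for each eigenvalue $\lambda$ of $T^*$ the space $(\Gr_F^sH^{n+1}(\tilde X)_{\prim})^\lambda$ is a quotient of $\bigl(\coker\bigl((R'/J(y^d+f))_M\to\bigoplus_{q\in\tilde S}\cT_q\bigr)\bigr)^\lambda$. The heart of the argument is then elementary. The partial derivatives of $y^d+f$ are $\partial f/\partial x_0,\dots,\partial f/\partial x_n$ together with $dy^{d-1}$, so $J(y^d+f)=J(f)R'+(y^{d-1})$ and
\[ R'/J(y^d+f)\ \cong\ (R/J(f))[y]/(y^{d-1})\ =\ \bigoplus_{j=0}^{d-2}(R/J(f))\,y^j. \]
At $q=(p:0)$, writing the Euler relation for the homogeneous polynomial $y^d+f$ in local coordinates shows that $J(y^d+f)$ becomes $(f_p,\partial f_p/\partial z_1,\dots,\partial f_p/\partial z_n,y^{d-1})$, and hence $\cT_q\cong\cT_p\otimes_{\C}\C[y]/(y^{d-1})=\bigoplus_{j=0}^{d-2}\cT_p\,y^j$. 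The natural surjection $R'/J(y^d+f)\to\bigoplus_q\cT_q$ is $\C[y]$-linear, so it is the direct sum over $j\in\{0,\dots,d-2\}$ of copies of the map $R/J(f)\to\bigoplus_p\cT_p$, whose cokernel in degree $D$ has dimension the defect of $J^{\sat}(f)$ in degree $D$ by the proof of Proposition~\ref{prpOne}. Thus the cokernel of $(R'/J(y^d+f))_M\to\bigoplus_q\cT_q$ is the direct sum, over $j\in\{0,\dots,d-2\}$, of spaces of dimension the defect of $J^{\sat}(f)$ in degree $M-j$.

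It remains to match the summands with the eigenvalues. Since $T^*\widetilde\Omega=\zeta_d^{-1}\widetilde\Omega$, each generating form $\frac{g(x)\,y^j}{(y^d+f)^{n+1-s}}\widetilde\Omega$ is an eigenvector of $T^*$, so the grading of $R'/J(y^d+f)$ by powers of $y$ coincides, up to a fixed shift, with the eigenspace decomposition of $T^*$; comparing with the statement, the $\zeta_d^k$-eigenspace, for $k\in\{1,\dots,d-1\}$, is the summand $j=k-1$, that is, the one in degree $M-(k-1)=(n+1-s)d-n-1-k$. Combined with the preceding paragraphs this yields the asserted inequality. For the $1$-eigenspace one would need $j=d-1$; but $y^{d-1}\equiv 0$ in $R'/J(y^d+f)$ and in every $\cT_q$, so $(\Gr_F^sH^{n+1}(\tilde X)_{\prim})^{T^*=1}=0$ for all $s\in\{0,\dots,n-1\}$, and for the remaining Hodge graded pieces I would note that the linear projection $\tilde X\to\Ps^n$ from the point $(0:\dots:0:1)$ identifies $\tilde X/\langle T\rangle$ with $\Ps^n$, so $H^{n+1}(\tilde X;\Q)^T$ is the pullback of $H^{n+1}(\Ps^n;\Q)$, hence is spanned by a power of the hyperplane class of $\tilde X$ and meets $H^{n+1}(\tilde X)_{\prim}$ trivially. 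Therefore the $1$-eigenspace of $T^*$ on $H^{n+1}(\tilde X)_{\prim}$ vanishes.

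The main obstacle I foresee is the bookkeeping behind the first two paragraphs: one must check carefully that the entire construction of Section~\ref{SecCalCoh}, applied to $\tilde X$, is $T^*$-equivariant — so that passing to eigenspaces is legitimate — and, relatedly, pin down the precise shift between the grading of $R'/J(y^d+f)$ by powers of $y$ and the eigenspace decomposition of the cohomology; it is this shift, coming from $T^*\widetilde\Omega=\zeta_d^{-1}\widetilde\Omega$ together with the Poincar\'e residue, that makes the degree come out to exactly $(n+1-s)d-n-1-k$. The identification $\cT_q\cong\cT_p\otimes_{\C}\C[y]/(y^{d-1})$, which uses the Euler relation to recover $f_p$ among the local generators of $J(y^d+f)$, is the other point requiring care, although it is routine.
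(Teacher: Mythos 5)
Your proposal is correct and follows essentially the same route as the paper: decompose $R[y]/J(y^d+f)\cong\oplus_{r=0}^{d-2}y^rR/J(f)$ and $\cT_q\cong\oplus_{r=0}^{d-2}y^r\cT_p$, track the $T^*$-action through the (equivariant) commutative diagram of Proposition~\ref{prpOne} applied to $\tilde X$, and read off each eigenspace from the cokernel of the corresponding $y^{k-1}$-summand in degree $(n+1-s)d-n-1-k$. Your extra checks (semi-weighted homogeneity of $y^d+f_p$, and the quotient-by-$T$ argument for the $1$-eigenspace, which the paper instead deduces from the eigenvalues of $T^*$ on $\oplus_q H^{n+1}_q(\tilde X)$) are harmless refinements of the same argument.
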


\begin{proof}
Recall that $y^{d-1}$ is in the Jacobian ideal both on the local and the global side, and all other partials do not involve $y$.
Let $q\in \tilde{S}$ and let $p$ be the corresponding point in $S$ then  $\cT_q=\oplus_{r=0}^{d-2} y^r \cT_p$ as $\C$-algebras. Recall that $T^*$ maps $\Omega_q$ to $\zeta^{-1}_d\Omega_q$. Obviously $T^*$ acts on $H^n(\tilde{U})$ and sends $\Omega$ to $\zeta_d^{-1}\Omega$,
Since $y^{d-1}\in J(y^d+f)$ we can decompose the Jacobian ring of $\tilde{X}$ as follows
\[ R[z]/J(y^d+f) \cong \oplus_{r=0}^{d-2} y^r R/J(f). \]

Consider now 
\[ H^{n+1}(\tilde{U})(1) \to  H^{n+1}_q(\tilde{X}).\]
As above we find a commutative diagram
\[\xymatrix{
 (R[z]/J(y^d+f))_{(n+1-s)d-n-2}  \ar[r]^{\;\;\;\;\;\;\tau}\ar[d]& \oplus_{q\in \tilde{S}} \cT_q \ar[d]\\
  \Gr_P^s H^{n+1}(\tilde{U})(1) \ar[r]^{\overline{\tau}}& \oplus_{q\in \tilde{S}}\Gr_P^s H^{n+1}_q(\tilde{X})}\]
Both vertical maps are surjective.
Each of the above maps is equivariant for $T^*$.

In particular the eigenvalues of $T^*$ on $H^{n+1}_q(\tilde{X})$ are all $d$-th roots of unity, but different from $1$ and therefore the $1$-eigenspace of $T^*$ acting on $H^{n+1}(\tilde{X})_{\prim}$ is zero.

From the above diagram it follows that  the cokernel of the bottom row is a quotient of the cokernel of the top row. Moreover the dimension of the  $\zeta^{-k}_d$-eigenspace is at most the cokernel of
\[ \tau_k:y^{k-1} R/J(f)_{(n+1-s)d-n-2-(k-1)} \to \oplus_{p\in S} y^{k-1}\cT_p.\]
 In a neighbourhood of $p$ one can identify $\Proj(R/J)$ with $\Spec(\cT_p)$. In particular the scheme  $V(J)$ is just $V(\oplus \cT_p)$. In particular, the kernel of the map of $\tau$ is just $(J^{\sat}/J)_{(n+1-s)d-n-2}$. Let $\xi$ be the length of $V(J)=V(J^{\sat})$, the total Tjurina number of the singularities. Then the cokernel of $\tau_k$ has dimension
\[ \xi-h_{J^{\sat}}((n-s)d-n-1-k)\]
which equals
the defect of $J^{\sat}$ in degree $(n+1-s)d-n-1-k$, by definition.
\end{proof}

\section{Calculation of Alexander polynomial}\label{SecComb}
In this section we will use the results of the previous section to calculate the Alexander polynomial to identify a range of degrees for which the ideal $J^{\sat}$ has defect.

\begin{definition}\label{defAlexPol}\cite[Definition 1.1.19]{Dim}
Let $n\geq 2$.
Let $f\in R$ be a homogeneous polynomial of degree $d$ such that $X=V(f)\subset \Ps^n$ has isolated singularities. Let $F=Z(f+1)\subset \C^{n+1}$ be the affine Milnor fiber of the singularity $(f,0)$. Then the \emph{Alexander polynomial} of $X$ is the characteristic polynomial of the monodromy operator acting on $H^{n-1}(F)$ and denoted by $\Delta_X(t)$.
\end{definition}

Consider $\tilde{X}=Z(y^d+f)\subset \Ps^{n+1}$.
Then the map $(x_0,\dots,x_n)\to (x_0:\dots:x_n:1)$ maps $F$ onto $\tilde{X}\setminus (\tilde{X}\cap Z(y))$. The set $\tilde{X}\cap Z(y)$ equals $X$. In this way we find an exact sequence
\[ 0 \to H^{n}_c(X)_{\prim} \to H^{n+1}_c(F)\to H^{n+1}_c(\tilde{X})_{\prim} \to 0.\]
The map $F \to \tilde{X}\setminus(\tilde{X}\cap Z(y))$ is an isomorphism. Using Poincar\'e duality on $F$ we obtain that
\[ H^{n-1}(F)\cong H^{n+1}_c(F)^ * \cong H^{n+1}_c(\tilde{X}\setminus (\tilde{X}\cap Z(y)))^*.\]
In particular, $H^{n-1}(F)$ is an extension of  $H^{n+1}_{\prim}(\tilde{X})^*$ by  $H^n(X)_{\prim}^*$.

As is shown in \cite[Remark 6.2.23]{Dim} we have that the monodromy operator on $H^{n+1}_c(F)$ is just the extension of the operator $T$ on $H^{n+1}(\tilde{X})_{\prim}$ and the identity map on $H^n(X)_{\prim}$.
Therefore the Alexander polynomial of $X$ is $(t-1)^a \varphi(t)$ with $a=h^n(X)_{\prim}$ and $\varphi(t)$ the characteristic polynomial of $T$ on $H^{n+1}(\tilde{X})_{\prim}$.
Since $T^d$ is the identity operator we have that all zeroes of the Alexander polynomial are $d$-th roots of unity, and from Proposition~\ref{prpOther} it follows that $\varphi(1)\neq 0$.

In the case that all singularities of $X$ are semi-weighted homogeneous, $H^n(X)$ has a pure weight $n$ Hodge structure and $H^{n+1}(\tilde{X})$ has a pure weight $n+1$ Hodge structure, see \cite{SteQua}.

Steenbrink \cite{SteenSpec} studied extensively the spectrum of polynomials with isolated singularities. The polynomial $f$ has a one-dimensional singular locus, hence there are two spectra, one associated with $H^{n}(F)$ and one with $H^{n-1}(F)$, see \cite[Section II.8.10]{Kuli}.

In the sequel we will call the spectrum associated with $H^{n-1}(F)$ the spectrum of $f$.
We will use the spectrum merely for bookkeeping reasons.
\begin{definition}
Let $\Z[\Q]$ be the group of formal sums of rational numbers, i.e., the set of expressions of the form $\sum_{\alpha\in \Q} n_\alpha [\alpha]$, with $n_{\alpha}\in \Z$ for all $\alpha$ and such that the set $\{\alpha\mid n_{\alpha}\neq 0\}$ is finite. The group law on $\Z[\Q]$ is the natural addition.

The \emph{spectrum} $sp(f)$ of $f$ is  the element $\sum n_\alpha [\alpha]$ of $ \Z[\Q]$ such that
\begin{enumerate}
\item If $\alpha\not \in [0,n]\cap \frac{1}{d}\Z$ then  $n_{\alpha}=0$,
\item If $\alpha$ is an integer then $n_\alpha=\dim \Gr_F^{n-\alpha} H^{n}(X)_{\prim}$, 
\item If $\alpha$ is not an integer, but $d\alpha$ is integer then let $s=\lceil \alpha \rceil$, and $k=d(s-\alpha)$. Then $n_\alpha$ equals the dimension of $\zeta_d^{-k}$ eigenspace for $T^*$ acting on $\Gr_F^{n+1-s} H^{n+1}(\tilde{X})$.
\end{enumerate}
\end{definition}

\begin{lemma}\label{lemSymDeg}
We have $n_{\alpha}=n_{n-\alpha}$ and  $\sum_{\alpha} n_{\alpha}=\deg(\Delta_X)$.
\end{lemma}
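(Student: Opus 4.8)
The plan is to prove the two assertions of Lemma~\ref{lemSymDeg} separately, starting from the spectrum's definition and the structure results obtained in Section~\ref{SecCalCoh}.

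First I would establish the symmetry $n_\alpha = n_{n-\alpha}$. The key input is that, under the semi-weighted homogeneous hypothesis, $H^n(X)_{\prim}$ carries a pure Hodge structure of weight $n$ and $H^{n+1}(\tilde X)_{\prim}$ a pure Hodge structure of weight $n+1$, as recalled just above. Hodge symmetry together with the reality of the cohomology gives $\overline{\Gr_F^s H^n(X)_{\prim}} \cong \Gr_F^{n-s} H^n(X)_{\prim}$, which handles the integer values of $\alpha$ directly via case (2) of the definition. For non-integer $\alpha$ with $d\alpha \in \Z$, I would combine Hodge symmetry on $H^{n+1}(\tilde X)_{\prim}$ with the fact that complex conjugation intertwines the $\zeta_d^{-k}$-eigenspace of $T^*$ with the $\zeta_d^{k}$-eigenspace (since $T$ is defined over $\mathbf{R}$ and $\overline{\zeta_d^{-k}} = \zeta_d^{k}$). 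Tracking the bookkeeping: if $\alpha$ has $s = \lceil \alpha \rceil$ and $k = d(s-\alpha)$, then $n-\alpha$ has $\lceil n-\alpha \rceil = n+1-s$ (because $\alpha \notin \Z$) and the associated residue is $d((n+1-s) - (n-\alpha)) = d(1 - s + \alpha) = d - k$, whose eigenvalue $\zeta_d^{-(d-k)} = \zeta_d^{k}$ is exactly the conjugate eigenvalue. So $n_{n-\alpha} = \dim\big(\zeta_d^{k}\text{-eigenspace on } \Gr_F^{s} H^{n+1}(\tilde X)\big)$, which matches $\dim\big(\zeta_d^{-k}\text{-eigenspace on }\Gr_F^{n+1-s}H^{n+1}(\tilde X)\big) = n_\alpha$ by Hodge symmetry plus conjugation of eigenspaces. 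One must also check the supports are compatible, i.e. $\alpha \in [0,n]$ iff $n - \alpha \in [0,n]$, which is immediate.

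Next I would prove $\sum_\alpha n_\alpha = \deg \Delta_X$. By the discussion preceding the lemma, $\Delta_X(t) = (t-1)^a \varphi(t)$ where $a = h^n(X)_{\prim} = \dim H^n(X)_{\prim}$ and $\varphi$ is the characteristic polynomial of $T$ on $H^{n+1}(\tilde X)_{\prim}$, so $\deg \Delta_X = \dim H^n(X)_{\prim} + \dim H^{n+1}(\tilde X)_{\prim}$. On the spectrum side, the integer terms contribute $\sum_{\alpha \in \Z} n_\alpha = \sum_s \dim \Gr_F^s H^n(X)_{\prim} = \dim H^n(X)_{\prim}$ (using purity, so the Hodge filtration exhausts the whole space), and the non-integer terms contribute $\sum$ over all pairs $(s,k)$ with $1 \le k \le d-1$ of $\dim\big(\zeta_d^{-k}\text{-eigenspace on }\Gr_F^{n+1-s}H^{n+1}(\tilde X)\big)$. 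Summing over $k$ from $1$ to $d-1$ recovers the full space $\Gr_F^{n+1-s}H^{n+1}(\tilde X)_{\prim}$ — here I use Proposition~\ref{prpOther}, which tells us the $1$-eigenspace of $T^*$ on $H^{n+1}(\tilde X)_{\prim}$ vanishes, so no eigenvalue is missed — and then summing over $s$ gives $\dim H^{n+1}(\tilde X)_{\prim}$. Adding the two contributions yields $\deg \Delta_X$.

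The main obstacle I anticipate is purely organizational rather than deep: getting the ceiling/eigenvalue bookkeeping in case (3) exactly right, so that the involution $\alpha \mapsto n-\alpha$ visibly corresponds to $(s,k) \mapsto (n+1-s, d-k)$ and hence, after conjugation, to Hodge symmetry. One subtlety worth a remark is that when $n$ is odd and $\alpha = n/2$ is a half-integer lying in $\frac1d\mathbf{Z}$, the fixed point of the involution forces a self-conjugate eigenvalue situation; but since $\alpha = n/2$ is never an integer there, it falls under case (3) and the argument goes through uniformly, as $d - k = k$ then simply means the eigenvalue is $\zeta_d^{k} = \pm 1$ and is its own conjugate. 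No separate treatment is needed. The $n=2$ case is likewise fine since there $H^2(X)_{\prim}$ is the only integer-spectrum contribution and the argument above specializes without change.
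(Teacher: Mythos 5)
Your proposal is correct and follows essentially the same route as the paper: purity of the Hodge structures on $H^n(X)_{\prim}$ and $H^{n+1}(\tilde X)_{\prim}$ plus Hodge symmetry for integer $\alpha$, complex conjugation intertwining the $\zeta_d^{k}$- and $\zeta_d^{d-k}$-eigenspaces for non-integer $\alpha$, and diagonalizability of $T^*$ together with the vanishing of its $1$-eigenspace (Proposition~\ref{prpOther}) for the degree count. Your ceiling/residue bookkeeping $(s,k)\mapsto(n+1-s,d-k)$ matches the computation $n+1-s-\frac{d-k}{d}=n-(s-\frac{k}{d})$ in the paper.
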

\begin{proof}
Suppose first that $\alpha$ is some integer. The Hodge structure on $H^n(X)$ is pure of weight $n$, see \cite{SteQua}. Hence $h^{\alpha ,n-\alpha}_{\prim}=h^{n-\alpha,\alpha}_{\prim}$. In particular,  we find that $n_\alpha=n_{n-\alpha}$.

Suppose now that $\alpha$ is not an integer. Let $s=\lceil \alpha \rceil$ and $k=d(s-\alpha)$. Let $\zeta_d=\exp(2\pi i/d)$.
The Hodge structure on $H^{n+1}(\tilde{X})$ is pure of weight $n+1$ by \cite{SteQua}. Complex conjugation maps the $\zeta_d^e$-eigenspace to the $\zeta_d^{d-e}$-eigenspace. In  particular, the $\zeta_d^k$-eigenspace on $H^{s,n+1-s}$ and the $\zeta_d^{d-k}$ eigenspace of $H^{n+1-s,s}$ have the same dimension, hence $n_{s-\frac{k}{d}}=n_{n+1-s-\frac{(d-k)}{d}}$. Since we have
\[n +1-s-\frac{(d-k)}{d}=n-\left(s-\frac{k}{d}\right),\]
the statement follows.

Since $(T^*)^d$ is the identity it follows that $T^*$ is diagonalizable on $H^{n+1}(\tilde{X})$ and all eigenvalues of $T^*$ are $d$-th roots of unity. Moreover $1$ is not an eigenvalue by Proposition~\ref{prpOther}.

Hence the sum of the dimensions of the eigenspaces of $H^n(X)_{\prim}$ and  of the eigenspaces of $H^{n+1}(\tilde{X})_{\prim}$ equal the total dimension which in turn equals the degree of the Alexander polynomial.
Hence
 \[\sum_\alpha n_{\alpha}=\deg(\Delta_X).\]
\end{proof}

Let $J$ be the Jacobian ideal of $f$. Then $J^{\sat}$ is the ideal of the scheme $V(J)$. Let $\xi$ be the length of this scheme.
Proposition~\ref{prpOne} and Proposition~\ref{prpOther} imply the following result:

\begin{proposition}\label{prpBoundn} Suppose $\alpha>1$. 
We have
\[ n_{\alpha}\leq \xi-h_{J^{\sat}}(\alpha d-n-1).\]
\end{proposition}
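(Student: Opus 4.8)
The plan is to read off the bound directly from the eigenspace estimates of Section~\ref{SecCalCoh}, treating the integer and non-integer cases separately according to the definition of the spectrum. First I would dispose of the integer case: if $\alpha > 1$ is an integer, then by definition $n_\alpha = \dim \Gr_F^{n-\alpha} H^n(X)_{\prim}$. Setting $s = n-\alpha$, the hypothesis $\alpha > 1$ forces $s < n-1$, so $s \in \{0,\dots,n-2\}$ and Proposition~\ref{prpOne} applies verbatim: $\dim \Gr_F^s H^n(X) \leq \xi - h_{J^{\sat}}((n-s)d - n - 1) = \xi - h_{J^{\sat}}(\alpha d - n - 1)$. (One should note $\Gr_F^s H^n(X)$ and $\Gr_F^s H^n(X)_{\prim}$ agree here, since for $n\geq 3$ the non-primitive part sits in a single Hodge bidegree and is killed by taking $\Gr_F^s$ for $s$ in this range; for $n=2$ the only integer $\alpha>1$ would be $\alpha=n=2$, excluded by $\alpha>1$ being strict unless $n\geq 3$.)

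Next I would handle the non-integer case. Suppose $\alpha > 1$ with $d\alpha \in \Z$ but $\alpha \notin \Z$. Write $s = \lceil \alpha \rceil$ and $k = d(s - \alpha)$, so $k \in \{1, \dots, d-1\}$; then by definition $n_\alpha$ is the dimension of the $\zeta_d^{-k}$-eigenspace of $T^*$ on $\Gr_F^{n+1-s} H^{n+1}(\tilde{X})$. Put $s' = n+1-s$; since $\alpha > 1$ we have $s = \lceil \alpha\rceil \geq 2$, hence $s' \leq n-1$, so $s' \in \{0,\dots,n-1\}$ and Proposition~\ref{prpOther} applies (with its index $k$ being our $k$, after matching the eigenvalue convention $\zeta_d^{\pm k}$ as in that proof). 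It gives that this eigenspace has dimension at most the defect of $J^{\sat}(f)$ in degree $(n+1-s')d - n - 1 - k = sd - n - 1 - k$. Finally one simplifies $sd - k = sd - d(s-\alpha) = \alpha d$, so the degree is $\alpha d - n - 1$, which is exactly $\xi - h_{J^{\sat}}(\alpha d - n - 1)$.

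The remaining case $d\alpha \notin \Z$ gives $n_\alpha = 0$ by clause (1) of the definition of the spectrum, and the inequality holds trivially since $\xi - h_{J^{\sat}}(\alpha d - n - 1) \geq 0$ (the Hilbert function of $J^{\sat}$ never exceeds the length $\xi$ of the finite scheme it cuts out, as $h_{J^{\sat}}(m) = \xi$ for $m \gg 0$ and $h_{J^{\sat}}$ is nondecreasing). So the statement follows by combining the three cases.

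The only genuinely delicate point is bookkeeping: matching the index $s$ in the spectrum definition with the cohomological degree $s$ appearing in Propositions~\ref{prpOne} and~\ref{prpOther}, and checking that the eigenvalue conventions ($\zeta_d^k$ versus $\zeta_d^{-k}$, and the Tate twist $(1)$) line up so that the degree shift comes out as $\alpha d - n - 1$ and not, say, $\alpha d - n$ or $\alpha d - n - 2$. I would carry out this index-chasing explicitly, since an off-by-one here propagates into the combinatorial estimates of the next section. There is no essential analytic obstacle beyond what has already been established; the proposition is a repackaging of the two eigenspace bounds in the language of the spectrum.
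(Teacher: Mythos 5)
Your proof is correct and follows essentially the same route as the paper: the integer case is reduced to Proposition~\ref{prpOne} with $s=n-\alpha$ and the non-integer case to Proposition~\ref{prpOther} with $s'=n+1-\lceil\alpha\rceil$, with the degree simplifying to $\alpha d-n-1$ in both cases. Your extra checks (that $s$ lies in the required range when $\alpha>1$, and the trivial case $d\alpha\notin\Z$) are sound bookkeeping that the paper leaves implicit.
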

\begin{proof}
Suppose first that $\alpha$ is an integer. Then
\[ n_\alpha=\dim \Gr_F^{n-\alpha} H^{n}(X)_{\prim}\]
Proposition~\ref{prpOne} implies that the latter is at most the defect of $J^{\sat}$ in degree $(n-(n-\alpha))d-n-1$.
Suppose now that $\alpha$ is not an integer then write $s=\lceil \alpha\rceil$ and $k=d(s-\alpha)$.  Then $n_\alpha$ equals the dimension of $\zeta_d^{-k}$ eigenspace for $T^*$ acting on $\Gr_F^{n+1-s} H^{n+1}(\tilde{X})$. Proposition~\ref{prpOther} implies that this at most the defect of  $J^{\sat}$ in degree $(n+1-(n+1-s))d-n-1-k=(s-\frac{k}{d})d-n-1$.
\end{proof}

\begin{lemma} Let $\Sigma \subset \Ps^n$ be a zero-dimensional scheme of length $m$. Then
\[ \delta(k):=m-h_{I(\Sigma)}(k)\]
is decreasing as a function in $k$.
\end{lemma}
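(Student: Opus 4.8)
The plan is to show that the defect function $\delta(k) = m - h_{I(\Sigma)}(k)$ is (weakly) decreasing by relating consecutive values via multiplication by a general linear form. Write $I = I(\Sigma)$ for short. The key point is that for a zero-dimensional scheme $\Sigma \subset \Ps^n$ the Hilbert function $h_{R/I}(k)$ eventually stabilizes at the value $m = \deg \Sigma$, and in every degree $h_{R/I}(k) \leq m$; thus $\delta(k) = m - h_{R/I}(k) \geq 0$, and showing $\delta$ decreasing is equivalent to showing $h_{R/I}(k+1) \geq h_{R/I}(k)$ for all $k$, i.e.\ the Hilbert function of $R/I$ is nondecreasing.

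**First I would** pick a linear form $\ell \in R_1$ whose vanishing locus $H = V(\ell)$ avoids the (finitely many) points in the support of $\Sigma$; such $\ell$ exists since the base field is infinite. Multiplication by $\ell$ gives a graded map $R/I \xrightarrow{\cdot \ell} R/I$ of degree $1$. Because $\ell$ does not vanish at any point of $\Sigma$, it is a nonzerodivisor on $R/I$ at each of the (finitely many) associated primes of $I$ — these are exactly the maximal ideals of the points of $\Sigma$ — so multiplication by $\ell$ is injective on $R/I$. Hence for each $k$ the map $(R/I)_k \to (R/I)_{k+1}$ is injective, giving $h_{R/I}(k) \leq h_{R/I}(k+1)$, which is exactly $\delta(k+1) \leq \delta(k)$.

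**The one subtlety** — and the step I expect to require the most care — is justifying that $\ell$ is a nonzerodivisor on the whole module $R/I$, not merely locally at each point. The clean way is to note that $I$ is a saturated ideal (it is the ideal of a subscheme, hence $I = I^{\sat}$), so $R/I$ has depth $\geq 1$, equivalently the irrelevant ideal $(x_0,\dots,x_n)$ is not an associated prime; the associated primes of $R/I$ are then precisely the homogeneous primes of the points of $\Sigma$. A linear form avoiding all these points lies outside every associated prime and is therefore a nonzerodivisor. Alternatively, and more elementarily, one can argue directly: if $g \in R_k$ with $\ell g \in I$, then $g$ vanishes on $\Sigma$ in the scheme-theoretic sense because at each point $p \in \Sigma$ the image of $\ell$ is a unit in the local ring $\mathcal{O}_{\Sigma, p}$, so the germ of $g$ lies in $I_p$; as $\Sigma$ is zero-dimensional this forces $g \in I^{\sat} = I$.

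**To conclude**, combining the injectivity of $\cdot \ell$ on $(R/I)_k$ for every $k \geq 0$ with the definition $\delta(k) = m - h_{R/I}(k)$ yields $\delta(k+1) - \delta(k) = h_{R/I}(k) - h_{R/I}(k+1) \leq 0$, so $\delta$ is decreasing as claimed. (Note $\delta$ is also bounded below by $0$ and eventually equals $0$, but monotonicity is all that is asserted.)
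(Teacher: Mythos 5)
Your proof is correct and takes essentially the same approach as the paper: both arguments multiply by a linear form that avoids the support of $\Sigma$ to compare degrees $k$ and $k+1$ (the paper phrases this via the evaluation map $R_k\to\oplus_{p\in\Sigma}A_p$ and notes $ev(x_0F)=ev(F)$ in the chart $x_0=1$, while you phrase it as injectivity of $\cdot\ell$ on $R/I$, which is equivalent since $I$ is saturated). Your extra care about $\ell$ being a nonzerodivisor via associated primes is a valid, slightly more formal justification of the same mechanism.
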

\begin{proof}
Choose coordinates on $\Ps^n$ such that $V(x_0)\cap \Sigma=\emptyset$.
The number $\delta(k)$ equals the dimension of the cokernel of the evaluation map
\[ev: R_k\to  \oplus_{p\in \Sigma} A_p\]
where $A_p$ is the affine coordinate ring of $\Sigma$ in an affine neighbourhood of $p$, i.e., obtained by setting $x_0=1$.

Let $f_1,\dots,f_m$ be a basis for the image in degree $k$ and let $F_1,\dots,F_k$ be elements such that $ev(F_i)=f_i$.

Then in degree $k+1$ we have that $ev(x_0F_i)=ev(F_i)=f_i$, hence the dimension of the image in degree $k+1$ is at least the dimension of the image in degree $k$.
\end{proof}

\begin{proposition}\label{propDefect}
Suppose $\alpha>1$ and $n_{\alpha}>0$. Then $J^{\sat}$ has defect in every degree $\leq \alpha d-n-1$.
\end{proposition}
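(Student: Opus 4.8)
The plan is to read the statement off directly from Proposition~\ref{prpBoundn} combined with the monotonicity lemma for the defect established just above.

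First I would note that under the hypothesis $n_\alpha>0$ the number $\alpha d-n-1$ is automatically an integer: by the definition of the spectrum, $n_\alpha=0$ whenever $d\alpha\notin\Z$, so $n_\alpha>0$ forces $d\alpha\in\Z$ and hence $\alpha d-n-1\in\Z$. Thus the assertion that $J^{\sat}$ has defect in degree $\alpha d-n-1$ is meaningful. Since moreover $\alpha>1$, Proposition~\ref{prpBoundn} applies and gives
\[ 0<n_\alpha\le \xi-h_{J^{\sat}}(\alpha d-n-1),\]
so $J^{\sat}$ has strictly positive defect in degree $\alpha d-n-1$.

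To propagate this to all smaller degrees I would apply the preceding lemma to the zero-dimensional scheme $\Sigma=V(J)=V(J^{\sat})$ of length $\xi$ — this is legitimate because $X$ has only isolated singularities, so that $J^{\sat}$ is exactly $I(\Sigma)$ — which tells us that $\delta(k):=\xi-h_{J^{\sat}}(k)$ is a decreasing function of $k$. Consequently, for every integer $k\le \alpha d-n-1$ we have $\delta(k)\ge \delta(\alpha d-n-1)\ge n_\alpha>0$, i.e.\ $J^{\sat}$ has defect in every degree $\le \alpha d-n-1$, as claimed.

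I do not expect any genuine obstacle here: the proposition is a formal consequence of the two results just proved. The only two points that require a moment's attention are the integrality of $\alpha d-n-1$, which follows from the support condition in the definition of the spectrum, and the observation that the hypothesis $\alpha>1$ is precisely what is needed in order to invoke Proposition~\ref{prpBoundn}.
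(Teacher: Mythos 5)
Your proof is correct and follows exactly the paper's argument: apply Proposition~\ref{prpBoundn} to get positive defect in degree $\alpha d-n-1$, then use the monotonicity lemma for the defect of the zero-dimensional scheme $V(J^{\sat})$ to propagate it to all smaller degrees. Your added observation that $n_\alpha>0$ forces $\alpha d\in\Z$, so the degree in question is an integer, is a small but welcome clarification that the paper leaves implicit.
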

\begin{proof}
If $n_{\alpha}>0$ then Proposition~\ref{prpBoundn} implies that  $J^{\sat}$ has defect in degree $ \alpha d-n-1$.

However, $J^{\sat}$ is the ideal of a zero dimensional projective scheme, and for such a scheme one has that the defect is a decreasing function in the degree by the previous lemma, hence  $J^{\sat}$ has defect in every degree  up to $ \alpha d-n-1$.
\end{proof}

In order to show that $J^{\sat}$ has defect in degree $d$ we need to find an $\alpha$ such that $n_{\alpha}>0$ and $\alpha d-n-1\geq d$. 
Using the symmetry of the spectrum we know that if for some $\alpha$ we have $n_{\alpha}>0$ then we can find an $\alpha\geq \frac{n}{2}$ with $n_{\alpha}>0$. However for $n=2$ and for $n\geq 3 $ and $d$ small this is insufficient to show that $J^{\sat}$ has defect in degree $d$. If we take into account the $k$ such that $\Delta_X$ has a primitive $k$-th root of unity as a zero then we find a slightly larger $\alpha$ contained in the interval $[\frac{n}{2},\frac{n+1}{2}]$.
To identify such an $\alpha$ we use the following notation:

\begin{definition} Let $k>2$ be an integer, such that $k\mid d$.
Let $\psi(k)$ be the largest integer $m$ such that $\gcd(m,k)=1$ and $m<\frac{k}{2}$. Define
\[
\alpha(n,d,k)=\left\{
\begin{array}{cl}
\frac{n}{2}  & \mbox {if } n \mbox{ is even and } k=1  \mbox{ or } n \mbox{ is odd and } k=2\\
\frac{n+1}{2} & \mbox {if } n\mbox{ is even and } k=2 \mbox{ or } n \mbox{ is odd and } k=1\\
\frac{n+1}{2}-\frac{1}{k} & \mbox{if } n \mbox{ is odd and } k>2.\\
\frac{n}{2}+\frac{\psi(k)}{k} & \mbox{if } n \mbox{ is even and } k>2.
\end{array}.\right.
\]
\end{definition}
Note that $\alpha(n,d,k)\geq \frac{n}{2}$.

For an integer $k$ let $\zeta_k:=\exp(2 \pi i/k)$.
\begin{proposition}\label{propBound} Suppose $\zeta_k$ is a root of the Alexander polynomial, then
 $n_{\alpha}$ is nonzero for some $\alpha$ at least $\alpha(n,d,k)$.
\end{proposition}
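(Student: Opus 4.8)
The plan is to unwind the definitions of the spectrum and of $\alpha(n,d,k)$ and to locate, among the eigenvalues of the monodromy that are primitive $k$-th roots of unity, the one whose associated spectral number is as large as possible (after using the symmetry $n_\alpha = n_{n-\alpha}$ from Lemma~\ref{lemSymDeg}). The hypothesis is that $\zeta_k$ is a zero of $\Delta_X$. Since every zero of $\Delta_X$ is a $d$-th root of unity, $k\mid d$ automatically, so the notation $\psi(k)$ and the definition of $\alpha(n,d,k)$ make sense. I would split into the four cases of the definition of $\alpha(n,d,k)$ according to the parity of $n$ and whether $k\in\{1,2\}$ or $k>2$.

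First the cases $k\in\{1,2\}$. If $k=1$, then $1$ is a zero of $\Delta_X$, so by the description of the Alexander polynomial as $(t-1)^a\varphi(t)$ with $a = h^n(X)_{\prim}$ we get $H^n(X)_{\prim}\neq 0$; hence some integer spectral number $n_\alpha$ is nonzero, and by symmetry we may take $\alpha\geq n/2$, and in fact $\alpha\geq\lceil n/2\rceil$ since $\alpha$ is an integer. This gives $\alpha\geq\frac{n+1}{2}$ when $n$ is odd and $\alpha\geq\frac n2$ when $n$ is even, matching $\alpha(n,d,1)$. If $k=2$, then $\zeta_2=-1$ is a zero, so $-1$ is an eigenvalue of $T^*$ on $H^{n+1}(\tilde X)_{\prim}$; the corresponding non-integer spectral numbers are those $\alpha$ with $d(\lceil\alpha\rceil-\alpha)\equiv d/2\pmod d$, i.e.\ $\alpha$ of the form (half-integer); by the symmetry $n_\alpha=n_{n-\alpha}$ we may assume $\alpha\geq n/2$, and since $\alpha$ is a non-integer half-integer this forces $\alpha\geq\frac{n+1}{2}$ if $n$ is even and $\alpha\geq\frac n2$ if $n$ is odd, again matching the definition.

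Now the main cases $k>2$. Here $\zeta_k$ being a zero of $\Delta_X$ means some primitive $k$-th root of unity $\zeta_k^{m}$ with $\gcd(m,k)=1$ occurs as an eigenvalue of $T^*$ on $H^{n+1}(\tilde X)_{\prim}$; and then by complex conjugation symmetry (used already in Lemma~\ref{lemSymDeg}) every primitive $k$-th root of unity occurs, so I may choose whichever residue $j$ mod $d$ is most convenient, as long as $j/d$ reduces to a fraction with denominator exactly $k$. The spectral number attached to an eigenvalue $\zeta_d^{-\kappa}$ on $\Gr_F^{n+1-s}H^{n+1}(\tilde X)$ is $\alpha = s - \kappa/d$, with $s=\lceil\alpha\rceil\in\{1,\dots,n\}$. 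To maximize $\alpha$ I want $s$ as large as possible and $\kappa/d$ as small as possible, i.e.\ I want the fractional part $\{\alpha\} = 1-\kappa/d$ as large as possible subject to $1-\kappa/d$ being (the reduction of) a fraction with denominator $k$ and coprime numerator. Using the conjugation symmetry to pair $\alpha$ with $n-\alpha$, one checks that the largest fractional part of the form $\frac{m'}{k}$ with $\gcd(m',k)=1$ that is compatible with $\alpha\geq n/2$ is: for $n$ odd, $\frac{k-1}{k}$ (so $\alpha = \frac{n-1}{2}+\frac{k-1}{k} = \frac{n+1}{2}-\frac1k$); for $n$ even, $\frac{\psi(k)}{k}$ where $\psi(k)$ is the largest numerator $<k/2$ coprime to $k$ (so $\alpha=\frac n2+\frac{\psi(k)}{k}$). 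In both cases this is exactly $\alpha(n,d,k)$, and it remains to confirm that the corresponding $s=\lceil\alpha\rceil$ lies in the admissible range $\{1,\dots,n\}$ of Proposition~\ref{prpOther}, which holds because $\alpha\in(n/2,(n+1)/2)$ forces $s=\lceil (n+1)/2\rceil\leq n$ for $n\geq 2$.

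The bookkeeping with the symmetry and the parity case-split is the only subtle point; the expected main obstacle is verifying cleanly that among the admissible eigenvalue/level pairs, the extremal choice is consistent with the constraint $s\in\{1,\dots,n-1\}$ (respectively $\{0,\dots,n-1\}$) imposed by Propositions~\ref{prpOne} and~\ref{prpOther}, and that one has not accidentally landed on an $\alpha$ with $n_\alpha$ forced to be zero for trivial reasons. I would handle this by always invoking Lemma~\ref{lemSymDeg} to move to an $\alpha\geq n/2$ after selecting the fractional part, since then $s=\lceil\alpha\rceil\leq\frac{n+1}{2}+1\leq n$ for all $n\geq 2$, so the relevant proposition applies and $n_\alpha$ is genuinely the nonzero eigenspace dimension we started from.
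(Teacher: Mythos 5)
Your argument follows the same route as the paper's: split on the parity of $n$ and on $k\in\{1,2\}$ versus $k>2$, use the symmetry $n_\alpha=n_{n-\alpha}$ of Lemma~\ref{lemSymDeg} to push the spectral number up to at least $n/2$, and then do the arithmetic of fractional parts to land exactly on $\alpha(n,d,k)$; the $k\in\{1,2\}$ cases are handled correctly. The one step that needs repair is your justification, in the case $k>2$, that \emph{every} primitive $k$-th root of unity occurs as an eigenvalue. Complex conjugation only pairs $\zeta_k^{m}$ with $\zeta_k^{-m}$, and with just one conjugate pair at your disposal the best fractional part you can guarantee for an $\alpha\geq n/2$ is $\min(m,k-m)/k$, which for $n$ even can be as small as $1/k$ --- strictly weaker than the required $\psi(k)/k$ whenever $\psi(k)>1$ (e.g.\ $k=5$). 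What is actually needed, and what the paper invokes, is that $\Delta_X\in\Q[t]$, so that the multiplicities $\sum_j n_{j+i/k}$ agree for all $i$ coprime to $k$ and the full Galois orbit of $\zeta_k$ is available; with that in hand your choice of the residue $\psi(k)$ (resp.\ $k-1$ for $n$ odd) works, provided you also note that when the symmetry is applied it flips the fractional part $m'/k$ to $(k-m')/k$ and in that event only \emph{increases} $\alpha$ past $\alpha(n,d,k)$, so the bound survives in both branches. Your closing worry about the admissible range of $s$ in Propositions~\ref{prpOne} and~\ref{prpOther} is not needed here: $n_\alpha$ is defined directly as an eigenspace dimension on a graded piece, and Lemma~\ref{lemSymDeg} already guarantees that $\sum_\alpha n_\alpha=\deg\Delta_X$, so every eigenvalue is accounted for in the spectrum.
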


\begin{proof}
If $k=1$ then  by the symmetry property (Lemma~\ref{lemSymDeg}) $n_{\alpha}>0$ for some integer $\alpha\geq \frac{n}{2}$.

If $k=2$ then  by the symmetry property $n_{\alpha}>0$ for some $p+\frac{1}{2}$ with $p$ an integer $p\geq \frac{n-1}{2}$.

Suppose now $k>2$.
Since the Alexander polynomial is in $\Q[t]$ we have for each $i$ with $0<i<k$ and $\gcd(i,k)=1$ that the sum
\[ \sum_{j=0}^n n_{j+\frac{i}{k}}\]
is independent of $i$. Using the symmetry we find that there at least $\varphi(k)/2$ values of $\alpha$ occurring in the spectrum which are of the form $\frac{n}{2}+\frac{i}{k}$ with $\gcd(i,k)=1$ and $i>0$.

Recall that there are precisely $\varphi(k)/2$ such values of $\alpha$ in the interval $[n/2,(n+1)/2]$. The largest one equals $\frac{n}{2}+\frac{\psi(k)}{k}$ if $n$ is even and $\frac{n-1}{2}+\frac{k-1}{k}$ if $n$ is odd.
\end{proof}

We will now identify the values of $(n,d,k)$ such that $\alpha(n,d,k) d -n-1\geq d$.

\begin{lemma} \label{lemGeneral} Let $n\geq 3$. Suppose that one of the following conditions hold
\begin{enumerate}
\item $d\geq 8, n=3$; 
\item $d\geq 5, n=4$;
\item  $d\geq 4, n\geq 5$.
\end{enumerate}
Then $(\alpha(n,d,k)-1)d \geq n+1$.
\end{lemma}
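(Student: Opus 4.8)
The plan is to reduce the inequality $(\alpha(n,d,k)-1)d \geq n+1$ to a case analysis on the parity of $n$ and on whether $k \in \{1,2\}$ or $k > 2$, using only the crude bound $\alpha(n,d,k) \geq \tfrac{n}{2}$ together with the sharper lower bounds coming from the definition. The key observation is that $\alpha(n,d,k) - 1 \geq \tfrac{n}{2} - 1 = \tfrac{n-2}{2}$, so in all cases it suffices to check $\tfrac{n-2}{2}\,d \geq n+1$, i.e. $d \geq \tfrac{2(n+1)}{n-2}$, and then to treat the finitely many small pairs $(n,d)$ not covered by this — precisely the pairs excluded by the hypotheses (1)--(3) — by hand, where one must use the actual value of $\alpha(n,d,k)$ rather than $\tfrac{n}{2}$.

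First I would split off the case $n \geq 5$: here $\tfrac{2(n+1)}{n-2} \leq 4$ (check: at $n=5$ it is $4$, and it decreases in $n$), so $d \geq 4$ already forces $\tfrac{n-2}{2}d \geq n+1$, hence $(\alpha(n,d,k)-1)d \geq (\tfrac{n}{2}-1)d \geq n+1$, which is exactly hypothesis (3). Next, $n=4$: here $\tfrac{2(n+1)}{n-2} = 5$, so $d \geq 5$ gives the bound directly from $\alpha \geq 2$, matching hypothesis (2). Finally $n=3$: here $\tfrac{2(n+1)}{n-2} = 8$, so $d \geq 8$ gives the bound from $\alpha \geq \tfrac{3}{2}$, matching hypothesis (1). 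In each of the three listed cases the crude bound $\alpha(n,d,k) \geq \tfrac{n}{2}$ therefore already suffices, and the sharper values of $\alpha(n,d,k)$ (the $-\tfrac1k$ or $+\tfrac{\psi(k)}{k}$ corrections) are not even needed — they only matter for the smaller degrees excluded here, which are handled separately in the subsequent lemmas.

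I expect the only mild subtlety to be bookkeeping: one should double-check the threshold $\tfrac{2(n+1)}{n-2}$ at the boundary values $n=3,4,5$ and confirm that the inequality $d \geq \tfrac{2(n+1)}{n-2}$ is equivalent to $(\tfrac{n}{2}-1)d \geq n+1$ after clearing denominators (both sides multiplied by $2$, using $n > 2$ so the direction of the inequality is preserved). There is no real obstacle here: since $\alpha(n,d,k) - 1 \geq \tfrac{n-2}{2} > 0$ for all $n \geq 3$ and the claimed ranges of $d$ were chosen precisely so that $\tfrac{n-2}{2}d \geq n+1$, the lemma is immediate once the arithmetic threshold is pinned down. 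The genuinely delicate analysis — where $\alpha(n,d,k)$ must be computed exactly and compared against $\tfrac{n+1}{d}$ — is deferred to the case-by-case treatment of small $(n,d)$ in the following lemmas, and is not part of this statement.
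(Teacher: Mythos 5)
Your proposal is correct and follows exactly the paper's argument: the paper also uses only the crude bound $\alpha(n,d,k)\geq \frac{n}{2}$ and reduces the claim to $d\geq \frac{2n+2}{n-2}=2+\frac{6}{n-2}$, which yields the thresholds $8$, $5$, $4$ at $n=3$, $4$, $\geq 5$ respectively. No further comment is needed.
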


\begin{proof}
By definition we have $\alpha(n,d,k)\geq \frac{n}{2}$. Hence we are fine if $d\geq \frac{2n+2}{n-2}=2+\frac{6}{n-2}$.
\end{proof}

\begin{lemma}  \label{lemSpecial}  Suppose $n=d=4$. If $k\neq 1 $ then $(\alpha(n,d,k)-1)d \geq n+1$.
\end{lemma}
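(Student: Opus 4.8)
The plan is to reduce the statement to a short finite case check. Since $\alpha(n,d,k)$ is defined only for $k\in\{1,2\}$ or for $k>2$ with $k\mid d$, and here $d=4$, the admissible values of $k$ are $k\in\{1,2,4\}$; as $k\neq 1$ by hypothesis, only $k=2$ and $k=4$ remain, so it suffices to verify $(\alpha(4,4,k)-1)\cdot 4\geq 5$ for these two values.

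For $k=2$: here $n=4$ is even and $k=2$, so the definition of $\alpha(n,d,k)$ gives $\alpha(4,4,2)=\frac{n+1}{2}=\frac{5}{2}$, whence $(\alpha(4,4,2)-1)\cdot 4=\frac{3}{2}\cdot 4=6\geq 5$. For $k=4$: here $n=4$ is even and $k>2$, so $\alpha(4,4,4)=\frac{n}{2}+\frac{\psi(4)}{4}$; since $\psi(4)$ is the largest integer $m$ with $\gcd(m,4)=1$ and $m<\frac{k}{2}=2$, we have $\psi(4)=1$, hence $\alpha(4,4,4)=2+\frac{1}{4}=\frac{9}{4}$ and $(\alpha(4,4,4)-1)\cdot 4=\frac{5}{4}\cdot 4=5$, so the inequality holds with equality.

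There is no real obstacle here: the whole argument is a direct substitution into the definition of $\alpha$. The only things worth noting are that the crude estimate $\alpha(n,d,k)\geq\frac{n}{2}$ used in Lemma~\ref{lemGeneral} does not suffice when $n=d=4$ (it would only give $(\alpha(n,d,k)-1)d\geq 4$), which is precisely why this case is singled out, and that $k=4$ is the borderline case in which equality, rather than strict inequality, occurs.
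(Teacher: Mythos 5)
Your proof is correct and follows exactly the paper's argument: reduce to $k\in\{2,4\}$ since $k\mid d=4$ and $k\neq 1$, then compute $\alpha(4,4,2)=\tfrac{5}{2}$ and $\alpha(4,4,4)=\tfrac{9}{4}$ directly from the definition. The additional remarks about why the crude bound $\alpha\geq n/2$ fails here and about equality at $k=4$ are accurate but not needed.
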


\begin{proof}
Since $k$ divides $d$ and $k\neq 1$ we know that $k\in \{2,4\}$. The claim follows from $\alpha(4,4,2)=\frac{5}{2},\alpha(4,4,4)=\frac{9}{4}$.
\end{proof}

\begin{lemma} \label{lemSpecialA} Suppose $n=3$ and either
\begin{enumerate}
\item $k=1$ and $d\geq 4$ or
\item $k\geq 3$ and $k\neq d$ (then $d\geq 2k\geq 6$) or
\item $k=d$ and $d\geq 5$.
\end{enumerate}
Then $(\alpha(n,d,k)-1)d \geq n+1$.
\end{lemma}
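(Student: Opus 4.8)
\emph{Proof plan.} The plan is to reduce the statement to a one-line verification in each of the three cases by substituting the explicit piecewise formula for $\alpha(3,d,k)$; nothing beyond that definition is needed, since this is precisely the combinatorial bookkeeping step advertised in the introduction and none of the cohomological machinery of Sections~\ref{SecCalCoh}--\ref{SecComb} enters. Because $n=3$ is odd, the definition gives $\alpha(3,d,1)=2$ and $\alpha(3,d,k)=2-\frac1k$ for every $k>2$, and the target inequality reads $(\alpha(3,d,k)-1)d\geq n+1=4$.

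First I would dispatch case (1): $k=1$ forces $\alpha(3,d,1)=2$, so $(\alpha(3,d,1)-1)d=d\geq 4$ by hypothesis. Case (3) is nearly as immediate: $k=d\geq 5>2$ puts us in the last branch, so $\alpha(3,d,d)=2-\frac1d$ and $(\alpha(3,d,d)-1)d=d-1\geq 4$. Case (2) requires one extra observation: from $k\mid d$, $k\neq d$ and $k\geq 3$ one deduces that $m:=d/k$ is an integer with $m\geq 2$ (which is also the source of the parenthetical bound $d\geq 2k\geq 6$); then, still under $k>2$, one computes $(\alpha(3,d,k)-1)d=\bigl(1-\tfrac1k\bigr)d=m(k-1)\geq 2\cdot 2=4$.

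There is no genuine obstacle here; the only point demanding care is branch selection in the piecewise definition of $\alpha$. One must confirm that $k=d$ with $d\geq 5$ is governed by the $k>2$ clause rather than by $k=1$ or $k=2$, and that in case (2) the conditions $k\mid d$ and $k\neq d$ force $d/k\geq 2$ rather than merely $d/k\geq 1$. As a consistency check on the case division, the configurations $(d,k)=(3,1)$, $(3,3)$ and $(4,4)$---which just fail to meet the hypotheses of the three cases and reappear among the exceptional triples of the main theorem---are exactly those for which $(\alpha(3,d,k)-1)d$ equals $3$, $2$ and $3$ respectively, so the inequality genuinely fails there.
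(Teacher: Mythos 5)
Your proof is correct and follows essentially the same route as the paper: substitute the piecewise definition of $\alpha(3,d,k)$ (using that $k\mid d$) and verify $(\alpha-1)d\geq 4$ case by case, with cases (2) and (3) both reducing to $(k-1)j\geq 4$ for $d=kj$. The only cosmetic difference is that the paper treats $k\geq 3$ in one stroke and then excludes $k=d\in\{3,4\}$, while you split it into two cases; your sanity check on the excluded triples matches the paper's exceptional list.
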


\begin{proof}
Suppose that $k=1$. Recall that $\alpha(3,d,1)=2$. For $d\geq 4$ we have  $(\alpha(3,d,1)-1)d= d\geq 4=n+1$.

If $k\geq 3$ then $d=kj$ for some positive integer $j$. Recall that $\alpha(3,kj,j)=\frac{2k-1}{k}$. Hence $(\alpha-1) d= (k-1)j$. This is at least 4 if $j\geq 2$, or $j=1$ and $k\geq 5$. Hence we have to exclude the case $k=d$ and $k\in \{3,4\}$.
\end{proof}

\begin{remark}
Suppose now that $n=3$ and  $k=2$ and that we have $d=2j$ for some $j\geq 2$ (since we excluded $d=2$). Recall that $\alpha(3,2j,2)=\frac{3}{2}$. Hence $(\alpha(3,2j,2)-1)d=j$. In particular,  for $j=2,3$ (hence $d=4,6$) we have to exclude $k=2$. These are the only even values of $d$ between 3 and 7.
\end{remark}

\begin{lemma} \label{lemCubic} Suppose $d=3$ and $n\geq 3$. Moreover suppose that \[(n,k)\not \in \{(3,1),(4,1),(6,1),(3,3),(4,3),(5,3)\}.\]
Then $(\alpha(n,d,k)-1)d \geq n+1$.
\end{lemma}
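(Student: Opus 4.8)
The plan is to reduce the statement to a short arithmetic case analysis. The rational number $\alpha(n,d,k)$ only enters the picture, via Proposition~\ref{propBound}, when $\zeta_k$ is a root of the Alexander polynomial, and every such root is a $d$-th root of unity, so we must have $k\mid d$. With $d=3$ this forces $k\in\{1,3\}$, so there are really only two values of $k$ to treat. In each case, inspecting the definition of $\alpha(n,d,k)$ shows that $\alpha(n,3,k)$ depends only on the parity of $n$; hence the desired inequality $(\alpha(n,3,k)-1)\cdot 3\geq n+1$ becomes, after clearing the denominator, a single linear inequality in $n$ for each of the four combinations of (value of $k$) and (parity of $n$). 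I would solve these four inequalities and check that the finitely many failing values of $n$ are precisely the excluded ones.

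For $k=1$: if $n$ is even then $\alpha(n,3,1)=\frac{n}{2}$, and $3(\frac{n}{2}-1)\geq n+1$ is equivalent to $n\geq 8$, so among even $n\geq 3$ only $n=4$ and $n=6$ fail; if $n$ is odd then $\alpha(n,3,1)=\frac{n+1}{2}$, and $3\cdot\frac{n-1}{2}\geq n+1$ is equivalent to $n\geq 5$, so among odd $n\geq 3$ only $n=3$ fails. This accounts for the pairs $(3,1)$, $(4,1)$, $(6,1)$.

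For $k=3$ (so $k>2$, and $\psi(3)=1$, since $1$ is the largest integer below $\frac{3}{2}$ that is coprime to $3$): if $n$ is even then $\alpha(n,3,3)=\frac{n}{2}+\frac{1}{3}$, and $3(\frac{n}{2}+\frac{1}{3}-1)\geq n+1$ is equivalent to $n\geq 6$, so among even $n\geq 3$ only $n=4$ fails; if $n$ is odd then $\alpha(n,3,3)=\frac{n+1}{2}-\frac{1}{3}$, and $3(\frac{n+1}{2}-\frac{1}{3}-1)\geq n+1$ is equivalent to $n\geq 7$, so among odd $n\geq 3$ only $n=3$ and $n=5$ fail. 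This accounts for the pairs $(3,3)$, $(4,3)$, $(5,3)$, and together with the previous case exhausts the excluded set.

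I do not anticipate a genuine obstacle here: the entire content is the bookkeeping of these four sub-cases and the arithmetic of four linear inequalities. The only point that needs care is to observe at the outset that $k$ must divide $d$, so that the cases $k=2$ and $k\geq 4$ in the definition of $\alpha(n,d,k)$ simply do not occur when $d=3$; once this is noted, everything is routine.
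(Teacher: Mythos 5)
Your proposal is correct and follows essentially the same route as the paper: note that $k\mid d=3$ forces $k\in\{1,3\}$, then split by the parity of $n$ and verify four linear inequalities, whose failing values are exactly the six excluded pairs. The arithmetic in all four sub-cases checks out.
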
 

\begin{proof} Since $k$ is a divisor $d$ we know  $k\in \{1,3\}$.

Suppose first that $k=1$. 
If $n$ is odd then $(\alpha(n,d,k)-1)d=\frac{3(n-1)}{2}$. This is at least $n+1$ for $n\geq 5$.
If $n$ is even  then $(\alpha(n,d,k)-1)d=\frac{3n-6}{2}$. This is at least $n+1$ for $n\geq 7$.

Suppose now that $k=3$.
If $n$ is odd then $(\alpha(n,d,k)-1)d=\frac{3n-5}{2}$. This is at least $n+1$ for $n\geq 7$.
If $n$ is even  then $(\alpha(n,d,k)-1)d=\frac{3n-4}{2}$. This is at least $n+1$ for $n\geq 6$.
\end{proof}

%
%
%
%
%
\begin{lemma} \label{lemPlane} Suppose $n=2$, $d\geq 3$. Suppose that $k$ is not a pure prime power and that $(d,k)\not \in \{(6,6),(12,6)\}$.
Then $(\alpha(n,d,k)-1)d \geq n+1$.
\end{lemma}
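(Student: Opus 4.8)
The plan is to evaluate $\alpha(2,d,k)$ explicitly and reduce the claimed inequality to an elementary statement about the function $\psi$. Since $k$ is not a pure prime power it has at least two distinct prime divisors, so $k\ge 6>2$; moreover $\alpha(2,d,k)$ is only defined when $k\mid d$, so write $d=kj$ with $j\ge 1$ an integer. As $n=2$ is even, the defining formula gives $\alpha(2,d,k)=\tfrac{n}{2}+\tfrac{\psi(k)}{k}=1+\tfrac{\psi(k)}{k}$, and hence
\[(\alpha(2,d,k)-1)\,d=\frac{\psi(k)}{k}\cdot kj=\psi(k)\,j.\]
Thus the lemma is equivalent to the inequality $\psi(k)\,j\ge 3\ (=n+1)$.

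The crux is the elementary claim that $\psi(k)\ge 3$ whenever $k$ is not a pure prime power and $k\ne 6$ (indeed, $\psi(k)\ge 3$ for every integer $k\ge 7$). I would prove this by a short case distinction. If $k$ is odd and $k\ge 9$, then $4$ is coprime to $k$ and $4<k/2$, so $\psi(k)\ge 4$ (the case $k=7$ being checked directly, $\psi(7)=3$). If $k$ is even, then $2$ is never coprime to $k$, so either $\psi(k)\ge 3$ or $\psi(k)=1$; and $\psi(k)=1$ would mean that every prime $p$ with $2<p<k/2$ divides $k$. For $k\ge 8$ this already forces $3\mid k$, hence (as then $k\ge 12$) $5\mid k$, hence (as then $k\ge 30$) $7\mid k$, and so on, so that the product of these forced prime divisors would exceed $k$ — impossible. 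Concretely one checks the finitely many even $k$ below a small bound by hand and uses a crude primorial estimate above it.

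Granting the claim, the lemma follows at once. If $k\ne 6$, then $\psi(k)\ge 3$, so $\psi(k)\,j\ge 3$ for every $j\ge 1$, i.e.\ for every multiple $d$ of $k$. If $k=6$, then $\psi(6)=1$; since $6\mid d$ and $(d,k)\notin\{(6,6),(12,6)\}$, the smallest value left is $d=18$, so $j=d/6\ge 3$ and again $\psi(6)\,j=j\ge 3$. In every case $(\alpha(2,d,k)-1)\,d\ge 3=n+1$.

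The only genuine obstacle is the number-theoretic bound on $\psi(k)$; everything else is bookkeeping. It is worth observing that the two exceptions $(d,k)\in\{(6,6),(12,6)\}$ appear for structural reasons: $6$ is the unique non-prime-power $k$ with $\psi(k)=1$, and for that value of $k$ one needs $d\ge 18$ to reach $(\alpha-1)d\ge 3$.
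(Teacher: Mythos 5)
Your proof is correct and follows essentially the same route as the paper: both reduce the inequality to $(\alpha(2,d,k)-1)d=j\,\psi(k)\geq 3$ with $d=kj$, and both dispose of $k=6$ by observing that the exclusion of $(6,6)$ and $(12,6)$ forces $j\geq 3$. The only (harmless) difference is in the number-theoretic core: the paper rules out $\psi(k)\leq 2$ by noting that $\psi(k)=1$ forces $\varphi(k)=2$ (hence $k=6$) and $\psi(k)=2$ forces $\varphi(k)=4$ with $k$ odd (hence $k=5$, a prime power), whereas you prove the slightly stronger uniform bound $\psi(k)\geq 3$ for all $k\geq 7$ directly.
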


\begin{proof}
Write $d=kj$. The smallest $k$ which is a not a pure prime power is $6$. In particular, $(\alpha(2,d,k)-1)d=j\psi(k)$. 

If $\psi(k)=1$ then $\varphi(k)=2$. The only $k\geq 6$ for which this is possible is $k=6$. If $k=6$ then $d\geq 18$ and therefore $j\geq 3$.
If $j=1$ and $\psi(k)=2$ then $\varphi(k)=4$ and $k$ is odd.  In particular, $k$ would be equal to $5$, which we excluded.
Hence one of $\psi(k)>2$, $j>2$ or $\psi(k)=j=2$ holds and $(\alpha(2,d,k)-1)d=j\psi(k)\geq 3$.
\end{proof}

\begin{proposition}
Let $(n,d,k)$ be integers such that $d\geq 3,n\geq 2$ and $k\geq 1$ is a divisor of $d$.
Suppose $X\subset \Ps^n$ is an irreducible hypersurface of degree $d$ with isolated semi-weighted homogeneous singularities such that $\zeta_k$ is a zero of the Alexander polynomial of $X$.

Moreover assume that we are not in one of the following cases
\begin{enumerate}
\item $n=2$, $d\in \{6,12\}$ and $k=6$;
\item $n\in \{3,4,6\}$, $d=3$ and $k=1$; 
\item $n\in \{3,4,5\}$, $d=3$ and $k=3$;
\item $n=3$, $d\in \{4,6\}$ and $k=2$;
\item $n=3$, $d=k=4$;
\item $n=4$, $d=4$ and $k=1$;
\end{enumerate}

Then $J^{\sat}$ has defect in degree $d$.
\end{proposition}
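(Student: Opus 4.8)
The plan is to assemble the three ingredients built up above: Proposition~\ref{propBound}, which produces a spectral value $\alpha$ with $n_\alpha\neq 0$ and $\alpha\geq\alpha(n,d,k)$ as soon as $\zeta_k$ is a zero of $\Delta_X$; Proposition~\ref{propDefect}, which turns such an $\alpha>1$ into the assertion that $J^{\sat}$ has defect in every degree $\leq\alpha d-n-1$; and the combinatorial Lemmas~\ref{lemGeneral}, \ref{lemSpecial}, \ref{lemSpecialA}, \ref{lemCubic} and~\ref{lemPlane}. First I would note that $k\mid d$ is automatic, since every zero of $\Delta_X$ is a $d$-th root of unity. Since $\alpha\geq\alpha(n,d,k)\geq\frac{n}{2}$, the hypothesis $\alpha>1$ of Proposition~\ref{propDefect} holds automatically when $n\geq 3$, and when $n=2$ it holds as soon as $k\neq 1$. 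Granting $\alpha>1$, Proposition~\ref{propDefect} shows that $J^{\sat}$ has defect in every degree $\leq\alpha d-n-1$, and
\[ \alpha d-n-1\ \geq\ \alpha(n,d,k)\,d-n-1\ \geq\ d,\]
where the second inequality is equivalent to $(\alpha(n,d,k)-1)d\geq n+1$. Thus $J^{\sat}$ has defect in degree $d$ once $(\alpha(n,d,k)-1)d\geq n+1$, so the entire statement reduces to verifying this numerical inequality off the exceptional list.

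For $n\geq 3$ I would argue by cases on $d$. If $d=3$, Lemma~\ref{lemCubic} gives the inequality except when $(n,k)\in\{(3,1),(4,1),(6,1),(3,3),(4,3),(5,3)\}$, which is exactly items~(2) and~(3). For $d\geq 4$ there are three subcases. If $n=3$ and $4\leq d\leq 7$, Lemma~\ref{lemSpecialA} covers $k=1$, covers $k\geq 3$ with $k\neq d$, and covers $k=d\geq 5$; the surviving triples are $k=d=4$ (item~(5)) and $k=2$ with $d\in\{4,6\}$ (item~(4), via the remark following Lemma~\ref{lemSpecialA}). If $n=4$ and $d=4$, Lemma~\ref{lemSpecial} gives the inequality for $k\neq 1$, and $k=1$ is item~(6). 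In every remaining case one has $d\geq 8$ for $n=3$, $d\geq 5$ for $n=4$, and $d\geq 4$ for $n\geq 5$, so Lemma~\ref{lemGeneral} applies. A short bookkeeping check then confirms that the triples with $n\geq 3$ not handled are precisely items~(2)--(6), which completes this case.

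The case $n=2$ requires one more input. Here $H^2(X)$ is of pure type $(1,1)$, and $X$ being irreducible gives $H^2(X)_{\prim}=0$, so $1$ is not a zero of $\Delta_X$ and hence $k\neq 1$; in particular $\alpha(2,d,k)>1$, so $\alpha>1$ as needed. Next I would invoke the classical restriction on the cyclotomic factors of the Alexander polynomial of an irreducible plane curve: only factors $\Phi_m$ with $m$ divisible by at least two distinct primes occur, equivalently $\Delta_X(1)=\pm 1$ (see e.g.\ Libgober's divisibility theorems). Consequently $k$ is not a prime power, so $k\geq 6$, and Lemma~\ref{lemPlane} yields $(\alpha(2,d,k)-1)d\geq n+1$ except for $(d,k)\in\{(6,6),(12,6)\}$, which is item~(1). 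Chaining with Propositions~\ref{propBound} and~\ref{propDefect} as above finishes $n=2$.

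I expect the principal difficulty to be not any single deep step — all of the Hodge-theoretic and homological substance already lives in Sections~\ref{SecCalCoh} and~\ref{SecComb} — but rather the careful bookkeeping needed to see that the exceptional cases excluded in the various combinatorial lemmas (together with the remark after Lemma~\ref{lemSpecialA}) assemble exactly into items~(1)--(6), and the identification, in the plane-curve case, of the right classical fact about Alexander polynomials of irreducible curves that rules out prime-power $k$ and so brings us within reach of Lemma~\ref{lemPlane}.
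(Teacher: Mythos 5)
Your proposal is correct and follows essentially the same route as the paper: chain Propositions \ref{propBound} and \ref{propDefect} to get defect up to degree $\alpha(n,d,k)d-n-1$, then verify $(\alpha(n,d,k)-1)d\geq n+1$ via Lemmas \ref{lemGeneral}--\ref{lemPlane}, using for $n=2$ the classical fact (the paper cites Zariski, you cite Libgober's divisibility — same content) that prime-power roots of unity are not zeros of the Alexander polynomial of an irreducible plane curve. Your explicit check that $\alpha>1$ and your bookkeeping of how the lemmas' exceptions match items (1)--(6) are details the paper leaves implicit, but they do not change the argument.
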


\begin{proof}
The main result of  \cite{ZarIrr} implies that if $n=2$ then $\Delta_X(\zeta_{p^r})\neq 0$ for any prime number $p$ and nonnegative integer $r$. Hence if $n=2$ then $k$ is not a prime power.

Since $\zeta_k$ is a zero of the Alexander polynomial we know by Propositions~\ref{propDefect} and~\ref{propBound} that $J^{\sat}$ has defect in any degree up to $\alpha(n,d,k) d-n-1$. From Lemmata~\ref{lemGeneral},  \ref{lemSpecial}, \ref{lemSpecialA}, \ref{lemCubic}, \ref{lemPlane} it follows that $\alpha(n,d,k) d-n-1\geq d$.
\end{proof}

\begin{theorem}\label{thmMain}
Let $(n,d,k)$ be integers such that $d\geq 3,n\geq 2$ and $k\geq 1$ is a divisor of $d$.
Suppose $X\subset \Ps^n$ is an irreducible hypersurface of degree $d$ with isolated semi-weighted homogeneous singularities such that $\zeta_k$ is a zero of the Alexander polynomial of $X$.

Moreover assume that we are not in one of the  cases (1)-(6) of the previous Proposition. Then the equianalytic deformation space of $X$ is not $T$-smooth.
\end{theorem}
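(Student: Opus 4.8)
The plan is to obtain Theorem~\ref{thmMain} directly from the preceding Proposition, which supplies the only genuinely non-formal input: under the stated hypotheses on $(n,d,k)$, the ideal $J^{\sat}$ has defect in degree $d$. First I would recall the deformation-theoretic dictionary. Write $X=V(f)$ and let $\Sigma=V(J)$, the zero-dimensional scheme cut out by the Jacobian ideal. Its ideal sheaf has stalk at each $p\in X_{\sing}$ equal to the localization of $J(f)$, and by the Euler identity $d f=\sum_i x_if_{x_i}$ this localization coincides with the Tjurina ideal $\langle f_p,\partial f_p/\partial z_1,\dots,\partial f_p/\partial z_n\rangle$ of a local equation $f_p$; hence $\Sigma$ has length $\xi=\sum_p\tau_p$, the total Tjurina number, and $R/J^{\sat}$ computes the Hilbert function of $\Sigma$. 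I would then invoke the classical description (du~Plessis--Wall; Greuel--Lossen--Shustin; see also \cite{Dim}) of the equianalytic deformation space of $X$ inside the linear system $|\mathcal{O}_{\Ps^n}(d)|$: its Zariski tangent space at $[f]$ is the projectivization of $(J^{\sat})_d=H^0(\Ps^n,\mathcal{I}_\Sigma(d))$, its expected dimension is $\binom{n+d}{d}-1-\xi$, and $T$-smoothness at $[f]$ is equivalent to the vanishing $H^1(\Ps^n,\mathcal{I}_\Sigma(d))=0$.

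Next I would make the numerical comparison explicit. From the exact sequence $0\to\mathcal{I}_\Sigma(d)\to\mathcal{O}_{\Ps^n}(d)\to\mathcal{O}_\Sigma(d)\to0$ together with $H^1(\Ps^n,\mathcal{O}_{\Ps^n}(d))=0$ one gets
\[ \dim (J^{\sat})_d=\binom{n+d}{d}-h_{J^{\sat}}(d)=\binom{n+d}{d}-\xi+\bigl(\xi-h_{J^{\sat}}(d)\bigr), \]
so the tangent space exceeds the expected dimension by exactly the defect of $J^{\sat}$ in degree $d$, and $h^1(\Ps^n,\mathcal{I}_\Sigma(d))$ equals that same defect. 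Since the hypotheses of Theorem~\ref{thmMain} are precisely those of the preceding Proposition, that Proposition gives a strictly positive defect of $J^{\sat}$ in degree $d$; hence the tangent space of the equianalytic deformation space of $X$ is strictly larger than expected and the space is not $T$-smooth at $[X]$. No separate treatment of $n=2$ is needed: irreducibility of $X$ forces $H^2(X)_{\prim}=0$, so the integral part of the spectrum vanishes and the relevant contribution comes from $H^{n+1}(\tilde X)_{\prim}$, which is exactly the case the preceding Proposition covers, using the input from \cite{ZarIrr} that $k$ cannot be a prime power.

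The only delicate point is the bookkeeping in the first step: one must ensure that the deformation space under consideration — constant \emph{analytic} type at each singular point, as opposed to the $\mu$-constant or topologically equisingular strata — is precisely the one whose tangent and obstruction spaces are governed by the Tjurina ideal (and not by the Milnor, i.e.\ partials-only, ideal), and that globalizing to $\Ps^n$ correctly identifies the local Tjurina ideals with the stalks of $J^{\sat}$ via the Euler relation. The semi-weighted homogeneity hypothesis plays no role here, having already been spent in the preceding Proposition to force $F^\bullet=P^\bullet$ on the Milnor fibers; once the local-to-global dictionary above is in place, the passage from ``$J^{\sat}$ has defect in degree $d$'' to ``$X$ is not $T$-smooth'' is purely formal.
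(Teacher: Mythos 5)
Your proposal is correct and follows essentially the same route as the paper: the preceding Proposition supplies the positive defect of $J^{\sat}$ in degree $d$, and the criterion from Greuel--Lossen--Shustin (that $T$-smoothness of the equianalytic deformation space is equivalent to $J^{\sat}$ having no defect in degree $d$) finishes the argument. Your extra detail on the cohomological dictionary and the Euler-relation identification of the local Tjurina ideals with the stalks of $J^{\sat}$ is a correct elaboration of what the paper leaves to the citation.
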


\begin{proof}
From \cite[Section 1.1.4.1]{GLScurves} it follows that the equianalytic deformation space is $T$-smooth if and only if $J^{\sat}$ has no defect in degree $d$. \end{proof}

\section{Examples} \label{SecExa}

We start with a general construction.

\begin{example}\label{exaConstr}
Let $f\in \C[\gamma_1,\dots,\gamma_n]$ be a weighted homogeneous polynomial, smooth outside the origin, with rational weights $w_1,\dots,w_n$, such that $\deg(f)=1$. Assume that the Tjurina algebra is not trivial.
Let $v$ be the smallest positive integer such that $vw_i$ is an integer for all $i$.

Let $m\geq 1$ be an integer let $g_i$ be a general form of degree $mvw_i$. Then
\[ f(g_1,\dots,g_n)\]
is a homogeneous polynomial of degree $d=mv$.  Let $X=V(f(g_1,\dots,g_n))\subset \Ps^n$.

Assume now that the $g_i$ are chosen such that $g_1,\dots,g_n$ form a regular sequence. Then the singular locus of $X$ contains the complete intersection $S_0=V(g_1,\dots,g_n)$. Moreover, if the $g_i$ are sufficiently general then at each point of $S_0$ the local equation for the singular point for some choice of coordinates is $f=0$.
Note that $S_0$ consists of
\[ (mv)^n \prod w_i\]
points. We claim that the Alexander polynomial of $X$ is non-trivial.

In Proposition~\ref{prpOther} we showed that $n_{\alpha}$ is \emph{at most} the dimension of the cokernel of 
\[ R_{\alpha d-n-1}\to \oplus_{p\in S} \cT_p.\]
However, if all singularities are weighted homogeneous then $\cT_p$ is a graded algebra. We can use this to determine $n_{\alpha}$ precisely. I.e., 
Proposition~\ref{prpCoKern} together with Remark~\ref{rmkSplit} yield that  $n_{\alpha}$ \emph{equals} the dimension of the cokernel of
\begin{equation}\label{eqnCoKer} R_{\alpha d-n-1}\to \oplus_{p\in S} (\cT_p)_\alpha.\end{equation}
The choice of local coordinates to obtain the correct grading on $\cT_p$ is very tricky, basically because one has to pick a particular part of a Taylor expansion and this is very sensitive to coordinate changes.
However, this is not an issue for the smallest $\alpha$ occurring in the spectrum of the singularity $f$. For such an $\alpha$ we have that  $(\cT_p)_{\alpha}=\cT_p/m_p$, where $m_p$ is the maximal ideal of $p$. Changing coordinates would yield  an automorphism given by multiplication by a nonzero number.

The smallest number in the spectrum of the isolated singularity $f=0$ is the sum of the weights $\alpha=\sum w_i$.
We want to show that $n_\alpha>0$ for this $\alpha$.  
For each $p\in S_0:=Z(g_1,\dots g_n)$ we have  $(\cT_p)_{\alpha}=\cT_p/m_p$. 
 Hence the cokernel of (\ref{eqnCoKer}) equals the cokernel of the evaluation map
 \[ R_{\alpha d-n-1}\to \oplus_{p\in S_0 }\C. \]
Hence 
\[ n_\alpha = \left((mv)^n\prod w_i\right)-h_{I(S_0)}(\alpha d-n-1).\]
 Since $g_1,\dots,g_n$ define a scheme-theoretic complete intersection, the  ideal generated by them has the Koszul complex on $g_1,\dots, g_n$ as its resolution. The highest degree of any generator in the resolution is $\sum \deg(g_i)=d\sum w_i$. Hence the largest degree for which there is defect equals $(d\sum w_i)-n-1$, i.e., we know that there is defect in degree
\[ d\sum w_i-n-1=\alpha d -n-1.\]
Hence $n_\alpha>0$ and  $\exp(2\pi i \sum w_i)$ is a zero of the Alexander polynomial.
From $n_\alpha>0$ for $\alpha=\sum w_i$ it follows that $n_\alpha>0$ for $\alpha=n-\sum w_i$ by Lemma~\ref{lemSymDeg}. Hence $J^{\sat}$ has defect in degree $(n-\sum w_i)d-n-1=(n-\sum w_i)mv-n-1$.

If we additionally assume that all the 
weights $w_i$ are of the form $1/k_i$, with $k_i\in \Z$ then the Tjurina number of each singularity is $\prod (k_i-1)=\prod \frac{1-w_i}{w_i}$. Hence the total Tjurina number equals
\[ (mv)^n \prod_i (1-w_i)=d^n\prod_i(1-w_i) \]
This number is so large that it is not clear whether for fixed $(n,k_1,\dots,k_n,m)$ there exists a component of the space of degree $mv$-curves with $(mv)^n\prod w_i$ singularities analytically equivalent with $f=0$ and constant Alexander polynomial.

If we instead assume that $n=2$, $f=\gamma_1^2+\gamma_2^3$, $w_1=\frac{1}{2},w_2=\frac{1}{3}$, $\alpha=\frac{5}{6}$ then we recover the example of B. Segre of degree $6m$ curves with $6m^2$ cusps.
\end{example}

We will now give two examples to illustrate how the above construction is the limit of known constructions of hypersurfaces with deformation space whose dimension is larger than expected.
The first example below is due to Greuel, Lossen and Shustin~\cite[Proposition 3.4]{CasFunc}:
\begin{example}\label{exaCusps}
Fix $k\in \Z_{>0}$.
Let $d=6m$, pick non-negative integers  $a_1,b_1\leq 6m$, such that $a_1$ and $b_1$ are divisible  by $2$ and by  $3$ respectively. Let $a_2=3m-\frac{a_1}{2}$, $b_2=2m-\frac{b_1}{3}$.

Pick general homogeneous forms $f_1,f_2,g_1,g_2$ in $x_0,x_1,x_2$ of degree $a_1,a_2,b_1,b_2$. Consider the curve
\[ f_1f_2^2+g_1g_2^3=0\]
Then $J^{\sat}\supset (f_2,g_2^2)$. The latter ideal is a complete intersection ideal. This ideal has defect in degree $d$ if and only if
\[ a_2+2b_2\geq 6m+3,\]
which happens if and only if
\[\frac{a_1}{2}+\frac{2b_1}{3}\leq m-3.\]

If the forms are chosen sufficiently general then the  singular locus is $f_2=g_2=0$ and all singularities are ordinary cusps. The curve has non-constant Alexander polynomial if and only if $(f_2,g_2)$ has defect in degree $5m-3$, i.e., if $a_2+b_2\geq 5m$. However $a_2\leq 3m, b_2\leq 2m$, therefore we have $a_1=b_1=0$.

Hence for each choice of $a_1,b_1\geq 0$ satisfying 
\[\frac{a_1}{2}+\frac{2b_1}{3}\leq m-3\]
 we find examples with non-$T$-smooth deformation space, but only for $a_1=b_1=0$ we find examples with non-constant Alexander polynomial.
\end{example}

We can apply the idea behind Example~\ref{exaCusps} to Example~\ref{exaConstr}:
\begin{example}\label{exaDefo}
Again let $f\in \C[\gamma_1,\dots,\gamma_n]$ be a weighted homogeneous polynomial, smooth outside the origin, with weights $w_1,\dots,w_n$, such that $\deg(f)=1$. Assume that the Tjurina algebra is not trivial.

Write $f= \sum_{i=1}^s M_i(\gamma_1,\dots,\gamma_n)$, where each $M_i$ is some $\C^*$-multiple of a monomial in $\gamma_1,\dots,\gamma_n$. Consider  \[F(\beta_1,\dots,\beta_s;\gamma_1,\dots,\gamma_n):= \sum_{i=1}^s \beta_i M_i(\gamma_1,\dots,\gamma_n).\]
Let $v$ be the smallest positive integer such that $vw_i$ is an integer for all $i$.

Let $m_1\geq 1$ be an integer and let $g_i$ be a general form of degree $m_1vw_i$. Let $m_2$ be another integer and let $h_1,\dots, h_s$ be general forms of degree $m_2$.
Consider the hypersurface 
 \[ F(h_1,\dots,h_s;g_1,\dots,g_n):= \sum_{i=1}^s h_i M_i(g_1,\dots,g_n).\]
If the forms are sufficiently general then one has singularities with local equation $f=0$  along $g_1=\dots=g_n=0$ and no further singularities.
 In this case $J^{\sat}$ has defect in degree
\[ \left(n-\sum w_i\right)m_1v-n-1.\]
The  degree of the hypersurface equals $m_1v+m_2$.
Hence if
\[ m_2\leq (n-w-1)m_1v-n-1,\]
then the deformation space is not $T$-smooth. For $m_2=0$ we recover the example with a non-constant Alexander polynomial.

If we can separate variables in $f$, i.e., suppose we can write 
\[ f=\sum_{j=1}^s f_j(x_{i_j},x_{i_j+1},\dots,x_{i_j+k_j})\]
with $i_{j+1}=i_j+k_j+1$
 then we can apply this construction for each summand with different choices of $m_2$, under the condition that $m_1v+m_2$ is the same for each summand. 
 
In Example~\ref{exaCusps} we did this for  $f=\gamma_1^2+\gamma_2^3$ and we took $f_1=\gamma_1^2$ and $f_2=\gamma_2^3$.
\end{example}

In the case of plane curves with $A_2$-singularities, Segre considered a family with $6m^2$ cusps on a curve of degree $6m$. In this case the expected dimension of the deformation space equals
\[ \frac{(6m+1)(6m+2)}{2} -1-12m^2= 6m^2+9m,\]
which is definitely positive. However in the case of $m^n$ ordinary $r$-fold points on a degree $rm$ hypersurface in $\Ps^n$ we obtain that the expected dimension is
\[ d_{r,n}(m)=\binom{mr+n}{n} - m^n (r-1)^n-1\]
The leading coefficient  of $d_{r,n}(m)$ equals $r^n (\frac{1}{n!}-1)$. Hence for $m$ sufficiently large we have that the expected dimension is negative. Therefore  the mere existence is sufficient to prove that  the deformation space is not $T$-smooth. We will now give an example of hypersurfaces with ordinary $r$-fold points, for which  the Alexander polynomial is non-constant and the expected dimension is positive. 

\begin{example}
Fix integers $\ell,r$ both at least 2. Let $n=2\ell$. Fix another positive integer $m$.

Fix $t$ polynomials $g_1,\dots, g_{\ell}$ of degree $m$ such that $g_1,\dots,g_{\ell},x_{\ell+1},\dots, x_{2\ell}$ define a complete intersection, which, as a scheme, is reduced.
Then this complete intersection consists of $m^\ell$ points.

Pick now $t$ generic forms $h_1,\dots, h_\ell$ of degree $m(r-1)$ from the ideal \[(g_1,\dots, g_\ell,x_{\ell+1},\dots,x_{2\ell})^{r-1}.\]  
Consider now $X=V( \sum_{i=1}^\ell x_{t+i} h _i).$
Then at each point in \[Z(g_1,\dots,g_\ell,x_{\ell+1},\dots,x_{2\ell})\] we have an $r$-fold point, and if the $h_i$ are chosen sufficiently general then these points are ordinary $r$-fold points.

In this way we have $m^\ell$ points of order $r$. The Milnor number of an $r$-fold point is $(r-1)^{2\ell}$, hence the expected codimension equals $m^\ell(r-1)^{2\ell}$, whereas the space of polynomials of degree $m(r-1)+1$ has dimension 
$\binom{m(r-1)+1+2\ell}{2\ell}$. The former polynomial is a polynomial of degree $\ell$ in $m$, whereas the latter polynomial is a polynomial of degree $2\ell$ in $m$ with positive leading coefficient. Hence for $m$ sufficiently large the expected dimension
\[  \binom{m(r-1)+1+2\ell}{2\ell}-m^\ell (r-1)^{2\ell}-1 \]
is positive.

In this case we have that the $\ell$-plane $x_{\ell+1}=\dots=x_{2\ell}=0$ defines a nonzero class of Hodge type $(\ell,\ell)$ in $H^{2\ell}(X,\C)_{\prim}$. In particular $n_\ell\neq0$.
 For this reason we have that $J^{\sat}$ has defect in any degree $\leq \ell m(r-1)-\ell-1$. The latter quantity is at least $d=m(r-1)+1$.
\end{example}


\begin{thebibliography}{10}

\bibitem{CogLib}
J.-I. Cogolludo-Agust{\'{\i}}n and A.~Libgober.
\newblock Mordell-{W}eil groups of elliptic threefolds and the {A}lexander
  module of plane curves.
\newblock {\em J. Reine Angew. Math.}, 697:15--55, 2014.

\bibitem{DelDim}
P.~Deligne and A.~Dimca.
\newblock Filtrations de {H}odge et par l'ordre du p\^ole pour les
  hypersurfaces singuli\`eres.
\newblock {\em Ann. Sci. \'Ec. Norm. Sup. (4)}, 23:645--656, 1990.

\bibitem{DimBet}
A.~Dimca.
\newblock Betti numbers of hypersurfaces and defects of linear systems.
\newblock {\em Duke Math. J.}, 60:285--298, 1990.

\bibitem{DimMilnorWS}
A.~Dimca.
\newblock On the {M}ilnor fibrations of weighted homogeneous polynomials.
\newblock {\em Compos. Math.}, 76:19--47, 1990.
\newblock Algebraic geometry (Berlin, 1988).

\bibitem{Dim}
A.~Dimca.
\newblock {\em Singularities and topology of hypersurfaces}.
\newblock Universitext. Springer-Verlag, New York, 1992.

\bibitem{CasFunc}
G.-M. Greuel, C.~Lossen, and E.~Shustin.
\newblock Castelnuovo function, zero-dimensional schemes and singular plane
  curves.
\newblock {\em J. Algebraic Geom.}, 9:663--710, 2000.

\bibitem{GLScurves}
G.-M. Greuel, C.~Lossen, and E.~Shustin.
\newblock {\em Singular algebraic curves}.
\newblock Springer Monographs in Mathematics. Springer, Cham, 2018.
\newblock With an appendix by Oleg Viro.

\bibitem{TorDec}
R.~Kloosterman.
\newblock Mordell-{W}eil lattices and toric decompositionns of plane curves.
\newblock {\em Math. Ann.}, 367:755--783, 2017.

\bibitem{Kuli}
V.~S. Kulikov.
\newblock {\em Mixed {H}odge structures and singularities}, volume 132 of {\em
  Cambridge Tracts in Mathematics}.
\newblock Cambridge University Press, Cambridge, 1998.

\bibitem{SteQua}
J.~H.~M. Steenbrink.
\newblock Intersection form for quasi-homogeneous singularities.
\newblock {\em Compos. Math.}, 34:211--223, 1977.

\bibitem{SteenSpec}
J.~H.~M. Steenbrink.
\newblock The spectrum of hypersurface singularities.
\newblock In {\em Actes du Colloque de Th\'{e}orie de Hodge}, number 179-180 in
  Ast\'{e}risque, pages 11, 163--184. SMF, 1989.
\newblock Actes du Colloque de Th\'{e}orie de Hodge (Luminy, 1987).

\bibitem{Tan}
A.~Tannenbaum.
\newblock On the classical characteristic linear series of plane curves with
  nodes and cuspidal points: two examples of {B}eniamino {S}egre.
\newblock {\em Compos. Math.}, 51:169--183, 1984.

\bibitem{ZarIrr}
O.~Zariski.
\newblock On the linear connection index of the algebraic surfaces $z^n=f(x,y)$.
\newblock {\em Proc. Natl. Acad. Sci. USA}, 15:494--501, 1929.


\bibitem{ZarSyz}
O.~Zariski.
\newblock On the irregularity of cyclic multiple planes.
\newblock {\em Ann. of Math. (2)}, 32:485--511, 1931.



\bibitem{ZarAlgSur}
O.~Zariski.
\newblock {\em Algebraic surfaces}.
\newblock Ergebnisse der Mathematik und ihrer Grenzgebiete, Band 61.
  Springer-Verlag, New York-Heidelberg, 1971.

\end{thebibliography}
\end{document}